\newtheorem{assumption}{Assumption}
\newtheorem{theorem}{Theorem}
\newtheorem{lemma}{Lemma}
\newtheorem{corollary}{Corollary}
\newtheorem{remark}{Remark}
\DeclareMathOperator*{\argmin}{arg\,min}
\newcommand{\prox}[3]{\operatorname{\mathbf{prox}}_{#1,#2} \left ( #3 \right )}
\title{\LARGE \bf Online Stochastic Convex Optimization: \\ Wasserstein Distance Variation}
\author{%
  Iman Shames and Farhad Farokhi
  \\[1em]
  Department of Electrical and Electronic Engineering,\\
  The University of Melbourne, Australia \\[1em]
  e-mails: \texttt{\{iman.shames,farhad.farokhi\}@unimelb.edu.au} \\
}
\begin{document}

\maketitle

\begin{abstract}%
	Distributionally-robust optimization is often studied for a fixed set of distributions rather than time-varying distributions that can drift significantly over time (which is, for instance, the case in finance and sociology due to underlying expansion of economy and evolution of demographics). This motivates understanding conditions on probability distributions, using the Wasserstein distance, that can be used to model time-varying environments. We can then use these conditions in conjunction with online stochastic optimization to adapt the decisions. We considers an online proximal-gradient method to track the minimizers of expectations of smooth convex functions parameterised by a random variable whose probability distributions continuously evolve over time at a rate similar to that of the rate at which the decision maker acts. We revisit the concepts of estimation and tracking error inspired by systems and control literature and provide bounds for them under strong convexity, Lipschitzness of the gradient, and bounds on the probability distribution drift. Further, noting that computing projections for a general feasible sets might not be amenable to online implementation (due to computational constraints), we propose an exact penalty method. Doing so allows us to relax the uniform boundedness of the gradient and establish dynamic regret bounds for tracking and estimation error. We further introduce a constraint-tightening approach and relate the amount of tightening to the probability of satisfying the constraints.
\end{abstract}

\section{Introduction}
In this paper, we are interested in solving problems of the form:
\begin{align*}
\min_{x\in\mathcal{X}\subseteq \mathbb{R}^{n_x}} \mathbb{E}^{w\sim \mathbb{P}_t}\{f(x,w)\},
\end{align*}
where $n_x$ is a positive integer, $\mathcal{X}$ is a closed and convex set, $\mathbb{P}_t$ is a time-varying distribution, and $f$ is convex in its first argument\footnote{The additional requirements on $f$ are presented in Assumption~\ref{assum:assumption}.}. We are particularly interested in solving this sequence of problems without the knowledge of $\mathbb{P}_t$ but with only access to samples $\{w_t^{(i)}\}_{i=1}^m$ from distribution $\mathbb{P}_t$.
These problems can be solved with tools from distributionally-robust optimization and machine learning, see, e.g.,~\cite{esfahani2018data,delage2010distributionally,kuhn2019wasserstein}, so long as $\mathbb{P}_t\in\mathcal{P}$ for all $t\in\mathbb{N}$.
Particularly, if there exists a set $\widehat{\mathcal{P}}$ that can be parameterized using only the samples $\{\{w_t^{(i)}\}_{i=1}^m\}_{t\in\mathbb{N}}$ from the said distributions such that $\mathcal{P}\subseteq \widehat{\mathcal{P}}$, we can replace the original problem with the robust version in
\begin{align*}
\min_{x\in\mathcal{X}\subseteq \mathbb{R}^{n_x}} \sup_{\mathbb{P}\in\widehat{\mathcal{P}}}\mathbb{E}^{w\sim \mathbb{P}}\{f(x,w)\}.
\end{align*}
This way, we can compute the optimal decisions for the worst-case distribution and provide out-of-sample performance guarantees~\cite{esfahani2018data}. However, this is no longer viable for time-varying distributions that  drift significantly over time, i.e., if there does not exists a bounded set $\mathcal{P}$ such that $\mathbb{P}_t\in\mathcal{P}$ for all $t\in\mathbb{N}$ or if such a set $\mathcal{P}$ exists but it is so large that the robust solution becomes extremely conservative. Examples of such problems appear in finance and sociology where the expansion of economy and the evolution of demographics can significantly modify the underlying distributions. This motivates understanding conditions on probability distributions that can be used to model time-varying environments and their effects on online stochastic optimization problems. These are the topics of interest in this paper.

\section{Related Work}
We refer to \cite{doi10113719780898718751} for a detailed treatment of stochastic programming and \cite{nocedal2006numerical} for an overview of convex optimization algorithms.

Our work squarely fits within the online optimization framework \cite{mokhtari2016online,dixit2019online,ajalloeian2019inexact}, where a  decision maker must make a series of decision over $T$ rounds according to some policy. The aim is then to show how suboptimal the decisions are, by adopting a \emph{regret measure}.

To the best of our knowledge, the works of \cite{agrawal2014fast} and \cite{hazan2014beyond} (and especially the latter) are the closest to this work. While those works propose a general framework, we focus on the behaviour of the problem for the specific setup. Particularly, we consider the case where the change in the problem in consecutive time-steps is due to the variation in the distribution of a cost function parameter and use the Wasserstein distance to quantify the change. We study the behaviour of an inexact proximal gradient algorithm where the inexactness is due to a stochastic gradient oracle and derive dynamic regret bounds. Moreover, we do away with the gradient uniform boundedness assumption for a special family of constraint sets (e.g., affine constraints).

Dating back to the pioneering work of~\cite{scarf1958}, distributionally-robust optimization has gained significant attention recently~\cite{ben2009robust,bertsimas2004price}. Distributionally-robust optimization considers stochastic programs where the distribution of the uncertain
parameters is not exactly known and belongs to an ambiguity set.  Several methods for modeling ambiguity sets have been proposed relying on discrete distributions~\cite{postek2016computationally,ben2013robust}, moment constraints~\cite{delage2010distributionally,goh2010distributionally,wiesemann2014distributionally}, Kullback-Leibler divergence~\cite{hu2013kullback,jiang2016data}, Prohorov metric~\cite{erdougan2006ambiguous}, and the Wasserstein distance~\cite{pflug2007ambiguity,wozabal2012framework,esfahani2018data}, among others. Of particular interest in distributionally-robust optimization is the family of data-driven distributionally-robust optimization, where the ambiguity set is parameterized based on samples of the distribution~\cite{chehrazi2010monotone,erdougan2006ambiguous,esfahani2018data,bertsimas2004price}. These studies have proved to be of extreme importance in machine learning~\cite{abadeh2015distributionally,sinha2017certifiable,kuhn2019wasserstein,farokhi2020regularization,mevissen2013data}. Distributionally-robust optimization has also found its way to filtering~\cite{xu2009kalman,levy2012robust, zorzi2016robust}.  These studies focus on fixed ambiguity sets and do not consider drifting distributions. In some cases, we might be able to construct a large enough set to contain the drifting distributions (if they are not moving within an unbounded space) but this approach is often undesirable. This is because, although the ambiguity set must be rich enough to capture the data-generating distributions, it must be small enough to exclude pathological distributions that can make the decision-making process conservative. Such a na\"{i}ve approach in treating drifting distributions can render the solution very conservative. 

In this paper, we opt for Wasserstein distance instead of Kullback-Leibler divergence. This is because Kullback–Leibler ambiguity sets typically fail to represent confidence sets for the unknown distribution because we need to impose absolutely continuity assumptions to get finite Kullback–Leibler divergence. This is clearly not possible when one of the distributions is continuous and the other one is a discrete empirical distribution; see, e.g.,~\cite{esfahani2018data} for more discussion.

\section{Wasserstein Distance}\label{sec:Wasserstein}
We use the notation $\mathcal{M}(\Xi)$ to denote the set of all probability distributions $\mathbb{Q}$ on $\Xi\subseteq\mathbb{R}^n$ such that $\mathbb{E}^{\xi\sim\mathbb{Q}}\{\|\xi\|\}<\infty$ for some norm $\|\cdot\|$ on $\mathbb{R}^m$. When evident from the context, we replace $\mathbb{E}^{\xi\sim\mathbb{Q}}\{\cdot\}$ with $\mathbb{E}^{\mathbb{Q}}\{\cdot\}$. For all $\mathbb{P}_1,\mathbb{P}_2\in\mathcal{M}(\Xi)$, the Wasserstein distance is
\begin{align*}
\mathfrak{W}(\mathbb{P}_1,\mathbb{P}_2):=\inf \Bigg\{\int_{\Xi^2} \|\xi_1-\xi_2\|\Pi(\mathrm{d}\xi_1,\mathrm{d}\xi_2):
& \Pi \mbox{ is a joint disribution on } \xi_1 \mbox{ and }\xi_2\\[-1em]
& \mbox{with marginals }\mathbb{P}_1\mbox{ and }\mathbb{P}_2\mbox{, respectively}\Bigg\}.
\end{align*}
Let us define $\mathbb{L}$ as the set of all Lipschitz functions with Lipschitz constant upper bounded by one, i.e., all functions $f$ such that $|f(\xi_1)-f(\xi_2)|\leq \|\xi_1-\xi_2\|$ for all $\xi_1,\xi_2\in\Xi$.
The Wasserstein distance can be alternatively computed using
\begin{align}
\mathfrak{W}(\mathbb{P}_1,\mathbb{P}_2):=\sup_{f\in\mathbb{L}} \Bigg\{&\hspace{-.03in}\int_{\Xi} f(\xi)\mathbb{P}_1(\mathrm{d}\xi)-\int_{\Xi} f(\xi)\mathbb{P}_2(\mathrm{d}\xi)\Bigg\}. \label{eqn:dual}
\end{align}
We refer interested readers to~\cite{kantorovich1958space} for more information.
\section{Online Stochastic Optimization}
We want to sequentially solve the following problems:
\begin{align}\label{eq:main_problem}
\min_{x\in\mathcal{X}\subseteq \mathbb{R}^{n_x}} \underbrace{\mathbb{E}^{w\sim \mathbb{P}_t}\{f(x,w)\}}_{:=F_t(x)}.
\end{align}
The time varying aspect of the optimization problem is the distribution $\mathbb{P}_t$. We make the following standing assumptions.

\begin{assumption} \label{assum:assumption} The following statements hold:
	\begin{itemize}
		\item[(a)] $f(x,w)$ is twice continuously differentiable with respect to $x$;
		\item[(b)] $f(x,w)$, $\partial f(x,w)/\partial x_i$, $\partial^2 f(x,w)/\partial x_i\partial x_j$ are integrable with respect to $w$ for all $i,j$;
		\item[(c)] $\mathbb{E}^{w\sim \mathbb{P}_t}\{|f(x,w)|\}$, $\mathbb{E}^{w\sim \mathbb{P}_t}\{|\partial f(x,w)/\partial x_i|\}$, and $\mathbb{E}^{w\sim \mathbb{P}_t}\{|\partial^2 f(x,w)/\partial x_i\partial x_j|\}$
		are bounded for all $i$, $j$, and $t$;
		\item[(d)] $\mathfrak{W}(\mathbb{P}_{t+1},\mathbb{P}_t)\leq \rho$ for all $t\in\mathbb{N}$;
		\item[(e)] $\|\nabla_x f(x,w)-\nabla_x f(x',w)\|\leq L_x\|x-x'\|$ for all $w$;
		\item[(f)] $|\partial f(x,w)/\partial x_i-\partial  f(x,w')/\partial x_i|\leq L_w\|w-w'\|$ for all $x$ and all $i$;
		\item[(g)] $\nabla_x^2 f(x,w)\succeq \sigma I$.
	\end{itemize}
\end{assumption}

These assumptions are common in online stochastic optimization~\cite{doi10113719780898718751}. Most of them are required to keep the changes in the optimizers of the time-varying optimization problem small enough so that we can track it with bounded error. We have added additional assumptions on probability distribution drift using the Wasserstein distance to use the tools from distributionally-robust optimization in online stochastic optimization. Again, it is worth mentioning that we use  Wasserstein distance instead of Kullback-Leibler divergence because, for getting finite Kullback–Leibler divergence, we need to impose absolutely continuity assumptions. This does not hold when one of the distributions is continuous and the other one is a discrete empirical distribution. Note that, in this paper, we only have access to random samples and do not know the underlying distributions.

%
%
%
%
%
%
\begin{lemma}\label{lemma:Lipschitzness} There exists a positive scalar $L_x$ such that  $\|\nabla_x F_t(x)-\nabla_x F_t(x')\|\leq L_x\|x-x'\|$.
\end{lemma}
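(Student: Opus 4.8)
The plan is to reduce the statement to Assumption~\ref{assum:assumption}(e) once the interchange of gradient and expectation has been justified; concretely, the target is to show that $\nabla_x F_t(x)=\mathbb{E}^{w\sim\mathbb{P}_t}\{\nabla_x f(x,w)\}$ and then to apply Jensen's inequality together with the pointwise Lipschitz bound on $\nabla_x f(\cdot,w)$.

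First I would fix an arbitrary $x_0\in\mathbb{R}^{n_x}$ together with a bounded ball $B$ of radius $R$ around it. By Assumption~\ref{assum:assumption}(e), for every $x\in B$ and every $w$ we have the pointwise affine bound $\|\nabla_x f(x,w)\|\leq\|\nabla_x f(x_0,w)\|+L_xR$, so each partial derivative $\partial f(x,w)/\partial x_i$ is dominated on $B$ by the $w$-function $g(w):=\sum_j|\partial f(x_0,w)/\partial x_j|+L_xR$, which is $\mathbb{P}_t$-integrable by Assumption~\ref{assum:assumption}(b)--(c). Combining this dominating function with the differentiability and continuity provided by Assumption~\ref{assum:assumption}(a), the classical rule for differentiation under the integral sign applies coordinate by coordinate and yields $\nabla_x F_t(x)=\mathbb{E}^{w\sim\mathbb{P}_t}\{\nabla_x f(x,w)\}$ for all $x$ (and all $t$).

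Given this identity, the rest is a one-line estimate: for any $x,x'\in\mathbb{R}^{n_x}$,
\begin{align*}
\|\nabla_x F_t(x)-\nabla_x F_t(x')\|
&=\left\|\mathbb{E}^{w\sim\mathbb{P}_t}\{\nabla_x f(x,w)-\nabla_x f(x',w)\}\right\|\\
&\leq\mathbb{E}^{w\sim\mathbb{P}_t}\{\|\nabla_x f(x,w)-\nabla_x f(x',w)\|\}
\leq L_x\|x-x'\|,
\end{align*}
where the first inequality is Jensen's inequality for the convex map $\|\cdot\|$ and the second is Assumption~\ref{assum:assumption}(e). In particular the Lipschitz constant of $\nabla_x F_t$ can be taken to be exactly the $L_x$ of Assumption~\ref{assum:assumption}(e), and it is uniform in $t$.

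The only technical point is the interchange of differentiation and expectation, and I expect that to be the main obstacle, though it is really a matter of careful bookkeeping rather than of genuine difficulty: Assumption~\ref{assum:assumption}(a)--(c) together with the affine growth bound from (e) (or, alternatively, a mean-value argument exploiting the Hessian integrability in (b)--(c)) supply precisely the locally integrable dominating function that the differentiation lemma requires. After that, no tool beyond Jensen's inequality and Assumption~\ref{assum:assumption}(e) is needed.
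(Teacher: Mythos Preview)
Your proposal is correct and follows essentially the same approach as the paper: interchange $\nabla_x$ and $\mathbb{E}^{w\sim\mathbb{P}_t}$ using Assumption~\ref{assum:assumption}(a)--(c), then apply Jensen's inequality and Assumption~\ref{assum:assumption}(e). The only difference is that you spell out the dominating-function argument for the interchange in more detail, whereas the paper simply cites Assumption~\ref{assum:assumption}(a)--(c) and moves on.
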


\begin{proof}
	See Appendix~\ref{proof:lemma:Lipschitzness}.
\end{proof}

\begin{lemma} \label{lem:opt_drift} Let $x^*_t := \argmin_{x\in\mathcal{X}}\;  F_t(x)$, then  $\|x^*_{t+1}-x^*_t\|\leq v$ for all $t\in\mathbb{N}$ where $v:=\rho n_x L_w/\sigma$.
\end{lemma}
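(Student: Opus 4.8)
The plan is to exploit strong convexity to control the distance between the two minimizers by the size of the gradient mismatch $\nabla_x F_{t+1}(x^*_t) - \nabla_x F_t(x^*_t)$, and then to bound that mismatch using the Wasserstein drift assumption~(d). First I would recall the standard perturbation estimate for strongly convex problems: since $F_t$ is $\sigma$-strongly convex in $x$ (Assumption~(g), carried through the expectation) and $x^*_t$, $x^*_{t+1}$ are the constrained minimizers over the fixed convex set $\mathcal{X}$, the first-order optimality conditions give $\langle \nabla_x F_t(x^*_t),\, x - x^*_t\rangle \ge 0$ and $\langle \nabla_x F_{t+1}(x^*_{t+1}),\, x - x^*_{t+1}\rangle \ge 0$ for all $x \in \mathcal{X}$. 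Testing the first with $x = x^*_{t+1}$ and the second with $x = x^*_t$, adding, and using $\sigma$-strong monotonicity of $\nabla_x F_{t+1}$ yields
\[
\sigma \|x^*_{t+1}-x^*_t\|^2 \le \langle \nabla_x F_{t+1}(x^*_t) - \nabla_x F_t(x^*_t),\, x^*_{t+1} - x^*_t\rangle \le \|\nabla_x F_{t+1}(x^*_t) - \nabla_x F_t(x^*_t)\|\,\|x^*_{t+1}-x^*_t\|,
\]
so $\|x^*_{t+1}-x^*_t\| \le \tfrac{1}{\sigma}\|\nabla_x F_{t+1}(x^*_t) - \nabla_x F_t(x^*_t)\|$.

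Next I would bound the gradient gap componentwise. For each coordinate $i$, $\partial F_t(x)/\partial x_i = \mathbb{E}^{w\sim\mathbb{P}_t}\{\partial f(x,w)/\partial x_i\}$ (differentiation under the integral sign is justified by the integrability and boundedness hypotheses in Assumption~(a)--(c)), so
\[
\left|\frac{\partial F_{t+1}(x^*_t)}{\partial x_i} - \frac{\partial F_t(x^*_t)}{\partial x_i}\right| = \left|\int \frac{\partial f(x^*_t,w)}{\partial x_i}\,\mathbb{P}_{t+1}(\mathrm{d}w) - \int \frac{\partial f(x^*_t,w)}{\partial x_i}\,\mathbb{P}_t(\mathrm{d}w)\right|.
\]
By Assumption~(f), the map $w \mapsto \partial f(x^*_t,w)/\partial x_i$ is $L_w$-Lipschitz, hence $w \mapsto L_w^{-1}\,\partial f(x^*_t,w)/\partial x_i$ lies in the class $\mathbb{L}$ of $1$-Lipschitz functions. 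Applying the Kantorovich--Rubinstein dual formula~\eqref{eqn:dual} and then Assumption~(d) gives
\[
\left|\frac{\partial F_{t+1}(x^*_t)}{\partial x_i} - \frac{\partial F_t(x^*_t)}{\partial x_i}\right| \le L_w\,\mathfrak{W}(\mathbb{P}_{t+1},\mathbb{P}_t) \le L_w\rho .
\]
Summing the squares over the $n_x$ coordinates yields $\|\nabla_x F_{t+1}(x^*_t) - \nabla_x F_t(x^*_t)\| \le \sqrt{n_x}\,L_w\rho$, and combining with the perturbation bound gives $\|x^*_{t+1}-x^*_t\| \le \sqrt{n_x}\,L_w\rho/\sigma$; replacing $\sqrt{n_x}$ by the coarser $n_x$ (which dominates it) recovers the stated $v = \rho n_x L_w/\sigma$.

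The routine parts are the strong-convexity perturbation inequality and the summation over coordinates. The step that needs the most care is the second one: justifying that $F_t$ really is differentiable with $\nabla_x F_t(x) = \mathbb{E}^{w\sim\mathbb{P}_t}\{\nabla_x f(x,w)\}$ so that the Wasserstein dual can be applied coordinatewise, and checking that the scaled partial derivative is genuinely $1$-Lipschitz on all of $\Xi$ so it is an admissible test function in~\eqref{eqn:dual} — this is exactly where Assumptions~(b), (c), and (f) are consumed. A minor subtlety is the use of the $\ell_2$-norm on $\mathbb{R}^{n_x}$ for the gradient together with whatever norm underlies $\mathbb{L}$ on $\Xi$; the coordinatewise argument sidesteps any need to match these norms, at the cost of the $\sqrt{n_x}$ (or $n_x$) factor.
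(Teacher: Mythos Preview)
Your proposal is correct and follows essentially the same route as the paper: bound each coordinate of the gradient gap via the Kantorovich--Rubinstein dual~\eqref{eqn:dual} together with Assumption~(f) and~(d), then invoke $\sigma$-strong convexity to pass to the minimizer drift. The only cosmetic differences are that you spell out the variational-inequality perturbation argument (the paper outsources this to an external lemma) and you obtain the slightly sharper factor $\sqrt{n_x}$ before relaxing to $n_x$; note, however, that in your displayed chain the middle inner product should carry the opposite sign---the correct bound is $\sigma\|x^*_{t+1}-x^*_t\|^2 \le \langle \nabla_x F_t(x^*_t)-\nabla_x F_{t+1}(x^*_t),\,x^*_{t+1}-x^*_t\rangle$---though Cauchy--Schwarz still delivers the same final estimate.
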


\begin{proof}
	See Appendix~\ref{proof:lem:opt_drift}.
\end{proof}


At each time $t$, we get $m$ realizations of random variable $w$ distributed according to $\mathbb{P}_t$ denoted by $\{w_t^{(i)}\}_{i=1}^m$. Based on all the realizations received in the past $q$ time steps, we can construct the empirical density function
\begin{align*}
\widehat{\mathbb{P}}_t=\frac{1}{mq}\sum_{k=t-q+1}^t \sum_{i=1}^m \delta_{w_k^{(i)}},
\end{align*}
where $\delta_\xi$ is the Dirac distribution function with its mass concentrated at $\xi$, i.e., $\int \delta_\xi(w)\mathrm{d}w=1$ and $\delta_\xi(w)=0$ for all $w\neq \xi$. The approximation of the gradient is given by
\begin{align}\label{eq:grad_approx}
\mathbb{E}^{w\sim \widehat{\mathbb{P}}_{t}}\{\nabla_x f(x,w)\}=\frac{1}{mq}\sum_{k=t-q+1}^t \sum_{i=1}^m
\nabla_x f(x,w_k^{(i)}) := \eta_t(x).
\end{align}
In what follows, we show how good of an approximation this is. To do so, we require the following standing assumption.

\begin{assumption} \label{assum:subgaussian}
	There exists constant $a>1$ such that $\mathbb{E}^{w\sim \mathbb{P}_k}\{\exp(\|w\|^a)\}<\infty$.
\end{assumption}

This assumption implies that distributions $\mathbb{P}_t$ are light-tailed. All distributions with a compact support set satisfy this assumption. This assumption is often implicit in the machine learning literature because the empirical average of the loss based on the samples may not even converge to the expected loss in general for heavy-tailed distributions~\cite{brownlees2015empirical,catoni2012challenging}.

\begin{lemma}\label{lem:error_bound_prob}
	The following inequality holds $$\mathbb{P}\{\|\nabla F_t(x_t) - \eta_t(x_t)\|\leq L_w(\zeta(\gamma)+(q-1)\rho/2)\}\geq 1-qn_x\gamma,$$
	where
	\begin{align*}
	\zeta(\gamma):=\begin{cases}
	\displaystyle\left(\frac{\log(c_1/\gamma)}{c_2m}\right)^{1/\max\{n_w,2\}}, & \displaystyle m\geq \frac{\log(c_1/\gamma)}{c_2}, \\
	\displaystyle\left(\frac{\log(c_1/\gamma)}{c_2m}\right)^{1/a}, & \displaystyle m< \frac{\log(c_1/\gamma)}{c_2},
	\end{cases}
	\end{align*}
	for all $m\geq 1$, $n_w\neq 2$, and $\gamma>0$.
\end{lemma}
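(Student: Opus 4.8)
The plan is to split $\nabla F_t(x_t)-\eta_t(x_t)$ into a \emph{drift} part, arising from the mismatch between $\mathbb{P}_t$ and the distributions $\mathbb{P}_k$ that generated the stored samples, and a \emph{sampling} part, arising from replacing each $\mathbb{P}_k$ by its $m$-sample empirical measure, and to control both through the Kantorovich--Rubinstein duality \eqref{eqn:dual}. Introduce the per-batch empirical measure $\widehat{\mathbb{P}}_t^{(k)}:=\tfrac1m\sum_{i=1}^m\delta_{w_k^{(i)}}$, so that $\widehat{\mathbb{P}}_t=\tfrac1q\sum_{k=t-q+1}^{t}\widehat{\mathbb{P}}_t^{(k)}$. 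Differentiating under the integral sign (licensed by Assumption~\ref{assum:assumption}(a)--(c)) gives $\partial_{x_i}F_t(x_t)=\mathbb{E}^{\mathbb{P}_t}\{\partial_{x_i}f(x_t,w)\}$, and the definition of $\eta_t$ gives $\big(\eta_t(x_t)\big)_i=\tfrac1q\sum_{k=t-q+1}^{t}\mathbb{E}^{\widehat{\mathbb{P}}_t^{(k)}}\{\partial_{x_i}f(x_t,w)\}$; inserting $\pm\,\mathbb{E}^{\mathbb{P}_k}\{\partial_{x_i}f(x_t,w)\}$ inside each term yields, coordinate-wise,
\[
\partial_{x_i}F_t(x_t)-\big(\eta_t(x_t)\big)_i=\frac1q\sum_{k=t-q+1}^{t}\Big(\mathbb{E}^{\mathbb{P}_t}\{\partial_{x_i}f(x_t,w)\}-\mathbb{E}^{\mathbb{P}_k}\{\partial_{x_i}f(x_t,w)\}\Big)+\frac1q\sum_{k=t-q+1}^{t}\Big(\mathbb{E}^{\mathbb{P}_k}\{\partial_{x_i}f(x_t,w)\}-\mathbb{E}^{\widehat{\mathbb{P}}_t^{(k)}}\{\partial_{x_i}f(x_t,w)\}\Big).
\]

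By Assumption~\ref{assum:assumption}(f), $w\mapsto L_w^{-1}\partial_{x_i}f(x_t,w)$ lies in $\mathbb{L}$ for every realization of $x_t$, so \eqref{eqn:dual} bounds the $k$-th summand of the first sum by $L_w\,\mathfrak{W}(\mathbb{P}_t,\mathbb{P}_k)$ and the $k$-th summand of the second sum by $L_w\,\mathfrak{W}(\mathbb{P}_k,\widehat{\mathbb{P}}_t^{(k)})$. The triangle inequality for $\mathfrak{W}$ together with Assumption~\ref{assum:assumption}(d) gives $\mathfrak{W}(\mathbb{P}_t,\mathbb{P}_k)\le(t-k)\rho$, whence the first (drift) sum contributes at most $\tfrac{L_w\rho}{q}\sum_{j=0}^{q-1}j=\tfrac{L_w(q-1)\rho}{2}$.

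For the second (sampling) sum I would invoke the measure-concentration inequality for empirical Wasserstein balls (due to Fournier and Guillin, in the form used in~\cite{esfahani2018data}): under Assumption~\ref{assum:subgaussian} there are constants $c_1,c_2>0$, which can be taken uniform in $t,k$ provided $\sup_t\mathbb{E}^{\mathbb{P}_t}\{\exp(\|w\|^{a})\}<\infty$, such that for $n_w\neq2$ one has $\mathbb{P}\{\mathfrak{W}(\mathbb{P}_k,\widehat{\mathbb{P}}_t^{(k)})\ge\varepsilon\}\le c_1\exp(-c_2m\varepsilon^{\max\{n_w,2\}})$ for $\varepsilon\le1$ and $\le c_1\exp(-c_2m\varepsilon^{a})$ for $\varepsilon>1$. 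Equating the right-hand side with $\gamma$ and solving for the exponent active in each regime returns exactly $\varepsilon=\zeta(\gamma)$, the two branches being selected according to whether the solution is $\le1$, i.e.\ whether $m$ exceeds $\log(c_1/\gamma)/c_2$. Hence $\mathbb{P}\{\mathfrak{W}(\mathbb{P}_k,\widehat{\mathbb{P}}_t^{(k)})\le\zeta(\gamma)\}\ge1-\gamma$ for each $k$, and a union bound over the $q$ batches and the $n_x$ gradient coordinates produces an event of probability at least $1-qn_x\gamma$ on which $\mathfrak{W}(\mathbb{P}_k,\widehat{\mathbb{P}}_t^{(k)})\le\zeta(\gamma)$ for every relevant $k$. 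On that event each coordinate of the sampling sum is at most $L_w\zeta(\gamma)$, so adding the drift bound gives $\|\nabla F_t(x_t)-\eta_t(x_t)\|\le L_w\big(\zeta(\gamma)+(q-1)\rho/2\big)$, as claimed.

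The principal obstacle is the careful use of the measure-concentration bound rather than the surrounding estimates: one must justify that $c_1,c_2$ may be chosen independently of $t$ and $k$ (this is where the light-tail Assumption~\ref{assum:subgaussian} is essential, in its uniform reading), handle the borderline dimension $n_w=2$ by exclusion, and check the algebra inverting the exponential tail into the two branches of $\zeta(\gamma)$. A subtler point is that $x_t$ is random and generally correlated with the stored samples; this causes no difficulty because $\big|\int h\,\mathrm{d}\mathbb{P}_k-\int h\,\mathrm{d}\widehat{\mathbb{P}}_t^{(k)}\big|\le\mathfrak{W}(\mathbb{P}_k,\widehat{\mathbb{P}}_t^{(k)})$ holds deterministically for every $h\in\mathbb{L}$ once the samples are fixed, so it applies to $h=L_w^{-1}\partial_{x_i}f(x_t,\cdot)$ for the realized $x_t$ without any independence requirement. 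The remainder is the arithmetic of $\sum_{j=0}^{q-1}j$ and the bookkeeping of the union bound.
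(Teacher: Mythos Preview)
Your proposal is correct and follows essentially the same route as the paper: Kantorovich--Rubinstein duality with Assumption~\ref{assum:assumption}(f) to pass from Wasserstein distances to coordinate-wise gradient errors, the triangle inequality plus Assumption~\ref{assum:assumption}(d) for the drift term $(q-1)\rho/2$, and the Fournier--Guillin concentration bound inverted to $\zeta(\gamma)$ together with a union bound over the $q$ batches and the $n_x$ coordinates. The only cosmetic difference is that the paper first bounds $\mathfrak{W}(\widehat{\mathbb{P}}_t,\mathbb{P}_t)$ as a whole---using convexity of the Wasserstein distance in the first argument, then the triangle inequality---before applying duality, whereas you split into drift and sampling pieces directly at the coordinate level; the two orderings are equivalent and yield the same estimate.
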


\begin{proof}
	By convexity of the Wasserstein distance~\cite[Lemma~2.1]{pflug2014multistage} , we get
	\begin{align*}
	\mathfrak{W}(\widehat{\mathbb{P}}_t,\mathbb{P}_t)
	\leq \frac{1}{q}\sum_{k=t-q+1}^t \mathfrak{W}\left(\frac{1}{m} \sum_{i=1}^m \delta_{w_k^{(i)}},\mathbb{P}_t\right).
	\end{align*}
	On the other hand, the triangle inequality for the Wasserstein distance~\cite[p.\,170]{prugel2020probability} implies that
	\begin{align*}
	\mathfrak{W}\left(\frac{1}{m} \sum_{i=1}^m \delta_{w_k^{(i)}},\mathbb{P}_t\right)
	&\leq
	\mathfrak{W}\left(\frac{1}{m} \sum_{i=1}^m \delta_{w_k^{(i)}},\mathbb{P}_k\right)+\mathfrak{W}(\mathbb{P}_k ,\mathbb{P}_t)\\
	&\leq
	\mathfrak{W}\left(\frac{1}{m} \sum_{i=1}^m \delta_{w_k^{(i)}},\mathbb{P}_k\right)+\mathfrak{W}(\mathbb{P}_k ,\mathbb{P}_{k+1})+\cdots+\mathfrak{W}(\mathbb{P}_{t-1} ,\mathbb{P}_{t}),
	\end{align*}
	and, as a result,
	\begin{align*}
	\mathfrak{W}\left(\frac{1}{m} \sum_{i=1}^m \delta_{w_k^{(i)}},\mathbb{P}_t\right)
	&\leq
	\mathfrak{W}\left(\frac{1}{m} \sum_{i=1}^m \delta_{w_k^{(i)}},\mathbb{P}_k\right)+(t-k)\rho.
	\end{align*}
	Following~\cite{fournier2015rate} and~\cite{esfahani2018data}, we know that
	\begin{align*}
	\mathbb{P}\left\{\mathfrak{W}\left(\frac{1}{m} \sum_{i=1}^m \delta_{w_k^{(i)}},\mathbb{P}_k\right)
	\leq \zeta(\gamma)\right\}
	\geq 1-\gamma.
	\end{align*}
	Therefore, with probability $1-q\gamma$, we get
	\begin{align*}
	\mathbb{P}\left\{\mathfrak{W}(\widehat{\mathbb{P}}_t,\mathbb{P}_t)
	\leq \frac{1}{q}\sum_{k=t-q+1}^t (\zeta(\gamma)+(t-k)\rho)\right\}\geq 1-q\gamma,
	\end{align*}
	and, as a result,
	\begin{align*}
	\mathbb{P}\left\{\mathfrak{W}(\widehat{\mathbb{P}}_t,\mathbb{P}_t)
	\leq \zeta(\gamma)+(q-1)\rho/2\right\}\geq 1-q\gamma,
	\end{align*}
	Following~\eqref{eqn:dual} with Assumption~\ref{assum:assumption}~(f), we get
	\begin{align*}
	|\mathbb{E}^{w\sim \widehat{\mathbb{P}}_{t}}\{\partial f(x,w)/\partial x_i\}- \mathbb{E}^{w\sim \mathbb{P}_t}\{\partial  f(x,w)/\partial x_i\}|
	\leq \mathfrak{W}(\mathbb{P}_t, \widehat{\mathbb{P}}_{t}) L_w,
	\end{align*}
	which implies that
	\begin{align*}
	\mathbb{P}\{|\mathbb{E}^{w\sim \widehat{\mathbb{P}}_{t}}\{\partial f(x,w)/\partial x_i\}- \mathbb{E}^{w\sim \mathbb{P}_t}\{\partial  f(x,w)/\partial x_i\}|\leq L_w(\zeta(\gamma)+(q-1)\rho/2)\}\geq 1-q\gamma.
	\end{align*}
	Therefore,
	\begin{align*}
	\mathbb{P}\{\|\mathbb{E}^{w\sim \widehat{\mathbb{P}}_{t}}\{\nabla_x f(x,w)\}-& \mathbb{E}^{w\sim \mathbb{P}_t}\{\nabla_x f(x,w)\}\|\leq L_w(\zeta(\gamma)+(q-1)\rho/2)\}\geq 1-qn_x\gamma.
	\end{align*}
	This concludes the proof.
\end{proof}

It is more useful to provide a bound on the expectation of the norm of the gradient error. This is done in the following theorem.

\begin{theorem} \label{cor:error_bound}
	The norm of the difference between the gradient $\nabla F_t(x_t)$ and $\eta_t(x_t)$, its approximation given by \eqref{eq:grad_approx}, is uniformly bounded in expectation. Specifically, $\mathbb{E}\{\|\nabla F_t(x_t) - \eta_t(x_t)\|\}\leq \Delta,$ where
	$$\Delta := \dfrac{L_w(q-1)\rho}{2} +q n_xc_1L_w\left(\dfrac{\Gamma\left(\frac{1}{\max\{2,n_w\}}\right)}{\max\{2,n_w\}(c_2m)^{1/\max\{2,n_w\}}}+\dfrac{\Gamma\left(\frac{1}{a}\right)}{a(c_2m)^{1/a}}\right),$$
	and $\Gamma(z)=\int_0^\infty \tau^{z-1} \exp(-\tau) \mathrm{d}\tau$ is the gamma function.
\end{theorem}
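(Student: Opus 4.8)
The plan is to convert the high-probability estimate of Lemma~\ref{lem:error_bound_prob} into the claimed bound in expectation through the layer-cake identity
\[
\mathbb{E}\{Y\}=\int_0^\infty \mathbb{P}\{Y>s\}\,\mathrm{d}s,\qquad Y:=\|\nabla F_t(x_t)-\eta_t(x_t)\|\geq 0.
\]
I would split this integral at $s_0:=L_w(q-1)\rho/2$. On $[0,s_0]$ the trivial bound $\mathbb{P}\{Y>s\}\leq 1$ contributes at most $s_0=L_w(q-1)\rho/2$, which is exactly the first term of $\Delta$; this part is unavoidable since Lemma~\ref{lem:error_bound_prob} only controls thresholds of the form $L_w(\zeta(\gamma)+(q-1)\rho/2)\geq s_0$.

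For $s>s_0$, I would set $\zeta:=(s-s_0)/L_w>0$, so that $s=L_w(\zeta+(q-1)\rho/2)$ is precisely the threshold in Lemma~\ref{lem:error_bound_prob}, and then apply that lemma with $\gamma$ chosen so that $\zeta(\gamma)=\zeta$. Inverting the definition of $\zeta(\cdot)$ gives $\log(c_1/\gamma)=c_2 m\,\zeta^{\beta}$, i.e.\ $\gamma=c_1\exp(-c_2 m\,\zeta^{\beta})$, with $\beta=\max\{2,n_w\}$ when $\zeta\leq 1$ (equivalently $m\geq\log(c_1/\gamma)/c_2$) and $\beta=a$ when $\zeta>1$ — this case split is exactly the one built into the piecewise formula for $\zeta(\gamma)$. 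Since both exponentials are nonnegative, in either regime
\[
\mathbb{P}\{Y>s\}\leq q n_x\gamma\leq q n_x c_1\bigl(\exp(-c_2 m\,\zeta^{\max\{2,n_w\}})+\exp(-c_2 m\,\zeta^{a})\bigr),
\]
a single bound valid for all $s>s_0$ (it is harmless that the right-hand side may exceed $1$ for $\zeta$ near $0$, since there $\mathbb{P}\{Y>s\}\leq 1$ anyway). The analogous bound holds uniformly in $t$ because neither $\zeta(\gamma)$ nor $\rho$ depends on $t$.

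Substituting $\zeta=(s-s_0)/L_w$, hence $\mathrm{d}s=L_w\,\mathrm{d}\zeta$, yields
\begin{align*}
\int_{s_0}^\infty \mathbb{P}\{Y>s\}\,\mathrm{d}s\;\leq\; q n_x c_1 L_w\int_0^\infty\Bigl(\exp(-c_2 m\,\zeta^{\max\{2,n_w\}})+\exp(-c_2 m\,\zeta^{a})\Bigr)\,\mathrm{d}\zeta,
\end{align*}
and each integral is evaluated by the substitution $u=c_2 m\,\zeta^{\beta}$, which turns $\int_0^\infty\exp(-c_2 m\,\zeta^{\beta})\,\mathrm{d}\zeta$ into $\frac{1}{\beta(c_2 m)^{1/\beta}}\int_0^\infty u^{1/\beta-1}e^{-u}\,\mathrm{d}u=\frac{\Gamma(1/\beta)}{\beta(c_2 m)^{1/\beta}}$. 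Adding this to the $[0,s_0]$ contribution reproduces $\Delta$ exactly.

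I expect the only genuine subtlety to be the inversion step: one must verify that the regime boundary $\zeta=1$ corresponds to the boundary $m=\log(c_1/\gamma)/c_2$ appearing in Lemma~\ref{lem:error_bound_prob}, so that the correct exponent $\beta$ is selected in each range, and then observe that dominating the regime-dependent $\gamma$ by the sum of the two exponentials is a valid (if slightly loose) bound over the whole half-line $s>s_0$. Beyond that, the layer-cake identity, the change of variables, and the reduction of the resulting integrals to gamma functions are all routine.
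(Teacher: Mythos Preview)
Your proposal is correct and follows essentially the same route as the paper: layer-cake formula, split at $s_0=L_w(q-1)\rho/2$, invert $\zeta(\gamma)$ to obtain $\gamma=c_1\exp(-c_2m\zeta^\beta)$ with the case boundary at $\zeta=1$, and reduce the resulting integrals to gamma functions via $u=c_2m\zeta^\beta$. The only cosmetic difference is that the paper first splits the tail integral at $t=L_w$ (i.e.\ $\zeta=1$) and then extends each piece to $[0,\infty)$, whereas you dominate the regime-dependent $\gamma$ by the sum of both exponentials pointwise; after the extension these are literally the same bound.
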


\begin{proof} 
	Note that
	\begin{align*}
	\mathbb{E}\{\|\mathbb{E}^{w\sim \widehat{\mathbb{P}}_{t}}&\{\nabla_x f(x,w)\}- \mathbb{E}^{w\sim \mathbb{P}_t}\{\nabla_x f(x,w)\}\|\}\\
	=&\int_0^\infty \mathbb{P}\{\|\mathbb{E}^{w\sim \widehat{\mathbb{P}}_{t}}\{\nabla_x f(x,w)\}- \mathbb{E}^{w\sim \mathbb{P}_t}\{\nabla_x f(x,w)\}\|\geq t\} \mathrm{d}t\\
	=&\int_0^{L_w(q-1)\rho/2} \mathbb{P}\{\|\mathbb{E}^{w\sim \widehat{\mathbb{P}}_{t}}\{\nabla_x f(x,w)\}- \mathbb{E}^{w\sim \mathbb{P}_t}\{\nabla_x f(x,w)\}\|\geq t\} \mathrm{d}t\\
	&+\int_{L_w(q-1)\rho/2}^\infty \mathbb{P}\{\|\mathbb{E}^{w\sim \widehat{\mathbb{P}}_{t}}\{\nabla_x f(x,w)\}- \mathbb{E}^{w\sim \mathbb{P}_t}\{\nabla_x f(x,w)\}\|\geq t\} \mathrm{d}t\\
	\leq &\int_0^{L_w(q-1)\rho/2} \hspace{-.1in}\mathrm{d}t
	+\int_{0}^\infty \mathbb{P}\{\|\mathbb{E}^{w\sim \widehat{\mathbb{P}}_{t}}\{\nabla_x f(x,w)\}- \mathbb{E}^{w\sim \mathbb{P}_t}\{\nabla_x f(x,w)\}\|\geq t+L_w(q-1)\rho/2\} \mathrm{d}t.
	\end{align*}
	Following Lemma~\ref{lem:error_bound_prob}, we have
	\begin{align*}
	\mathbb{P}\{\|\mathbb{E}^{w\sim \widehat{\mathbb{P}}_{t}}\{\nabla_x f(x,w)\}- \mathbb{E}^{w\sim \mathbb{P}_t}\{\nabla_x f(x,w)\}\|\geq t+L_w(q-1)\rho/2\}\leq qn_x\zeta^{-1} (t/L_w),
	\end{align*}
	where
	\begin{align*}
	\zeta^{-1}(\varepsilon)
	=
	\begin{cases}
	c_1\exp(-c_2m\varepsilon^{\max\{2,n_w\}}), & \varepsilon\leq 1,\\
	c_1\exp(-c_2m\varepsilon^a), & \varepsilon>1.\\
	\end{cases}
	\end{align*}
	Therefore,
	\begin{align*}
	\int_{0}^\infty \mathbb{P}\{\|&\mathbb{E}^{w\sim \widehat{\mathbb{P}}_{t}}\{\nabla_x f(x,w)\}- \mathbb{E}^{w\sim \mathbb{P}_t}\{\nabla_x f(x,w)\}\|\geq t+L_w(q-1)\rho/2\} \mathrm{d}t\\
	\leq &\int_{0}^{L_w} \mathbb{P}\{\|\mathbb{E}^{w\sim \widehat{\mathbb{P}}_{t}}\{\nabla_x f(x,w)\}- \mathbb{E}^{w\sim \mathbb{P}_t}\{\nabla_x f(x,w)\}\|\geq t+L_w(q-1)\rho/2\} \mathrm{d}t\\
	&+\int_{L_w}^\infty \mathbb{P}\{\|\mathbb{E}^{w\sim \widehat{\mathbb{P}}_{t}}\{\nabla_x f(x,w)\}- \mathbb{E}^{w\sim \mathbb{P}_t}\{\nabla_x f(x,w)\}\|\geq t+L_w(q-1)\rho/2\} \mathrm{d}t\\
	\leq &\int_{0}^{L_w} q n_xc_1\exp(-c_2m(t/L_w)^{\max\{2,n_w\}}) \mathrm{d}t+\int_{L_w}^\infty q n_x c_1\exp(-c_2m(t/L_w)^a) \mathrm{d}t\\
	\leq &\int_{0}^{\infty} q n_xc_1\exp(-c_2m(t/L_w)^{\max\{2,n_w\}})\mathrm{d}t+\int_{0}^\infty q n_x c_1\exp(-c_2m(t/L_w)^a) \mathrm{d}t\\
	= &q n_xc_1L_w\left(\Gamma\left(\frac{1}{\max\{2,n_w\}}\right)\frac{1}{\max\{2,n_w\}(c_2m)^{1/\max\{2,n_w\}}}+\Gamma\left(\frac{1}{a}\right)\frac{1}{a(c_2m)^{1/a}}\right).
	\end{align*}
	This concludes the proof.
\end{proof}

Both Lemma~\ref{lem:error_bound_prob} and Theorem~\ref{cor:error_bound} show that the the gradient approximation $\eta_t(x_t)$ becomes closer to the true gradient $\nabla F_t(x_t)$ when $m$ increases. However, the effect of $q$ is in the opposite. This is because we are considering drifting distributions and, in the worst case, $\mathbb{P}_t$ can be very far from $\mathbb{P}_{t-q+1}$.
In fact, in the worst case, we can have $\mathfrak{W}(\mathbb{P}_t,\mathbb{P}_{t-q+1})=q\rho$. Therefore, using old samples can potentially backfires if $q$ is large. Theoretically, the best choice is to set $q=1$. However, in practice, it might be suitable to select larger $q$ if the distributions are not following the worst-case drift.

In what follows, we briefly review the properties of proximal gradient methods applied in to our problem of interest using the online Optimization framework. Consider the following time-varying problem
\begin{align}
\min_{x\in\mathbb{R}^{n_x}} \quad F_t(x) + h(x)
\end{align}
where $F_t$ is defined as before and $h$ is a closed and proper function; see~\cite{parikh2014proximal} for definitions.
Define the proximal operator of a scaled function $\alpha h$ for a closed and proper function $h$ where $\alpha >0$ as
\begin{align}
\prox{h}{\alpha}{u} = \argmin_{x\in\mathbb{R}^{n_x}} \left (h(x) + \dfrac{1}{2\alpha} \|x-u\|^2 \right ).
\end{align}
The proximal gradient iterations then become
\begin{align} \label{eq:grad_proj_prox}
x_{t+1} = \prox{h}{\alpha_t}{x - \alpha_t \eta_t(x)}.
\end{align}
It is known that if $F_t = F$ for all $t$, $\eta(x) = \nabla F(x)$, and $h(x) = I_{\mathcal{X}}(x)$ where $I_\mathcal{X}$ is the indicator function of $\mathcal{X}$, i.e. it returns 0 if $x\in\mathcal{X}$ and returns $+\infty$ otherwise, then the iterations converge to the solution of the time-invariant problem $\min_{x\in\mathcal{X}} F(x)$.
However, in what follows, we do not limit ourselves to this scenario and the stated results hold for the case where \eqref{eq:grad_proj_prox} is applied to an online optimization problem for any choice of $h$.
Let $\epsilon_t$ be the error between $\eta_t(x_t)$ and $\nabla F_t(x_t)$, i.e.,
\begin{align}
\epsilon_t = \nabla F_t(x_t) - \eta_t(x_t).
\end{align}



\begin{lemma}\label{lem:time_wise_est_bound}
	For $\alpha_t\in(0,2/L_x)$, $\|x_{t+1}-x^*_t\| \leq r_t \|x_t-x^*_t\| + \alpha_t \|\epsilon_t\|$ where $r_t = \max(|1-\alpha_t L_x|, |1-\alpha_t \sigma|)$.
\end{lemma}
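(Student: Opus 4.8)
The plan is to run the standard contraction argument for inexact proximal-gradient iterations, using the nonexpansiveness of the proximal operator together with a spectral bound on the gradient-step map. First I would recall that $x^*_t$ is a fixed point of the exact proximal-gradient map: since $x^*_t$ minimizes $F_t$ over the relevant feasible set, the first-order optimality condition gives $x^*_t = \prox{h}{\alpha_t}{x^*_t - \alpha_t \nabla F_t(x^*_t)}$ for every $\alpha_t>0$. Since $x_{t+1} = \prox{h}{\alpha_t}{x_t - \alpha_t \eta_t(x_t)}$ by \eqref{eq:grad_proj_prox}, and the proximal operator of a closed proper (convex) function is firmly nonexpansive, in particular $1$-Lipschitz, we obtain
\begin{align*}
\|x_{t+1} - x^*_t\| \leq \big\| \big(x_t - \alpha_t \eta_t(x_t)\big) - \big(x^*_t - \alpha_t \nabla F_t(x^*_t)\big) \big\|.
\end{align*}

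Next I would add and subtract $\alpha_t \nabla F_t(x_t)$ inside the norm and use the definition $\epsilon_t = \nabla F_t(x_t) - \eta_t(x_t)$ to split the right-hand side as
\begin{align*}
\|x_{t+1} - x^*_t\| \leq \big\| (x_t - x^*_t) - \alpha_t \big(\nabla F_t(x_t) - \nabla F_t(x^*_t)\big) \big\| + \alpha_t \|\epsilon_t\|,
\end{align*}
by the triangle inequality. It then remains to show $\| (x_t - x^*_t) - \alpha_t (\nabla F_t(x_t) - \nabla F_t(x^*_t)) \| \leq r_t \|x_t - x^*_t\|$. For this I would write, via the fundamental theorem of calculus, $\nabla F_t(x_t) - \nabla F_t(x^*_t) = H_t (x_t - x^*_t)$ with $H_t := \int_0^1 \nabla^2 F_t\big(x^*_t + s(x_t - x^*_t)\big)\,\mathrm{d}s$, so that the quantity equals $\|(I - \alpha_t H_t)(x_t - x^*_t)\| \leq \|I - \alpha_t H_t\|\,\|x_t - x^*_t\|$. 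The matrix $H_t$ is symmetric, and I would argue $\sigma I \preceq H_t \preceq L_x I$: the lower bound comes from Assumption~\ref{assum:assumption}~(g) together with the fact that differentiation under the integral is justified by Assumption~\ref{assum:assumption}~(a)--(c), giving $\nabla^2 F_t(x) = \mathbb{E}^{w\sim\mathbb{P}_t}\{\nabla_x^2 f(x,w)\} \succeq \sigma I$, while the upper bound follows from the $L_x$-Lipschitzness of $\nabla F_t$ established in Lemma~\ref{lemma:Lipschitzness} (which for a twice-differentiable convex function is equivalent to $\nabla^2 F_t \preceq L_x I$). Consequently the eigenvalues of $I - \alpha_t H_t$ lie in $[1 - \alpha_t L_x,\, 1 - \alpha_t \sigma]$, whence $\|I - \alpha_t H_t\| \leq \max(|1-\alpha_t L_x|, |1-\alpha_t \sigma|) = r_t$, completing the argument. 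The restriction $\alpha_t \in (0, 2/L_x)$ is what additionally guarantees $r_t < 1$ (using $\sigma \leq L_x$), which is needed for the downstream tracking bounds, though the displayed inequality itself holds for any $\alpha_t > 0$.

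The routine steps are the nonexpansiveness estimate and the triangle-inequality split; the only step needing genuine care is the two-sided bound on $H_t$, specifically the legitimacy of passing the Hessian inside the expectation defining $F_t$ and reconciling the matrix inequality $\nabla^2 F_t \preceq L_x I$ with the operator-norm statement of Lemma~\ref{lemma:Lipschitzness}. I expect this to be the main (mild) obstacle; everything else is bookkeeping.
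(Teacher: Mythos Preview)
Your proposal is correct and follows essentially the same route as the paper: nonexpansiveness of the proximal operator, the fixed-point identity $x^*_t=\prox{h}{\alpha_t}{x^*_t-\alpha_t\nabla F_t(x^*_t)}$, a split isolating the $\alpha_t\|\epsilon_t\|$ term, and the contraction $\|(x-\alpha_t\nabla F_t(x))-(x^*_t-\alpha_t\nabla F_t(x^*_t))\|\le r_t\|x-x^*_t\|$. The only cosmetic difference is that the paper carries out the last step through a squared-norm expansion in the style of \cite[Proposition~6.1.8]{bertsekas2015convex}, whereas you obtain the same contraction constant via the integral Hessian $H_t=\int_0^1\nabla^2 F_t(x^*_t+s(x_t-x^*_t))\,\mathrm{d}s$ and the eigenvalue bounds $\sigma I\preceq H_t\preceq L_x I$; both arguments are standard and yield the identical $r_t$.
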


\begin{proof}
	See Appendix~\ref{proof:lem:time_wise_est_bound}.
\end{proof}

Before, continuing further, we comment on a nuance that is often neglected in the application of online optimization methods. Depending on the context,  one may want to use either $x_t$ or $x_{t+1}$ as a surrogate for $x^*_t$ at time $t$. If the aim is to take an action at time $t$ without being able to compute \eqref{eq:grad_proj_prox}, one needs to use $x_t$. However, if one can delay taking an action by one time-step, it is prudent to use the value $x_{t+1}$ obtained from \eqref{eq:grad_proj_prox}.
Borrowing some terminology from control systems, we define the \emph{tracking error} as $e_t:=\|x_t-x^*_t\|$, and the \emph{estimation error} as $\bar{e}_t:=\|x_{t+1}-x^*_t\|$\footnote{This noncausal definition might make some readers uncomfortable. We urge that they accept this for the sake of clarity of presentation. Otherwise, one may define $\bar{e}_t:=\|x_{t}-x^*_{t-1}\|$. In this case however, if the goal is to discuss the distance to $x^*_t$, we would need to use $e_t$ and $\bar{e}_{t+1}$ which result in a less clear presentation.}.
We have the following observation
\begin{align}
\|x_{t+1}-x^*_{t+1}\| & \leq \|x_{t+1}-x^*_t\| + \|x^*_{t+1} - x^*_t\| \notag\\
& \leq r_t \|x_t-x^*_t\| + \alpha_t \|\epsilon_t\| + \|x^*_{t+1} - x^*_t\|. \label{eq:time_wise_bound}
\end{align}
\begin{lemma}\label{lem:track}
	Let $\alpha$ be a constant scalar in $(0,2/L_x)$. Then
	\begin{align}
	\underset{t\rightarrow \infty}{\lim\sup} \quad \mathbb{E}[ e_t ] &\leq \dfrac {v + \alpha\Delta}{1-r}, \label{eq:asymp_track_err}\\
	\underset{t\rightarrow \infty}{\lim\sup} \quad \mathbb{E}[ \bar{e}_t ] &\leq \dfrac {rv + \alpha\Delta}{1-r}, \label{eq:asymp_est_err}
	\end{align}
	where $v$ and $\Delta$ are given in Lemma~\ref{lem:opt_drift} and Theorem~\ref{cor:error_bound}, respectively.
\end{lemma}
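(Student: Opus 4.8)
The plan is to convert the one-step inequality \eqref{eq:time_wise_bound} into a scalar linear recursion for the expected tracking error and then solve that recursion. First I would check that, under $\alpha \in (0,2/L_x)$ and the constant step size $\alpha_t = \alpha$, the contraction factor $r := \max(|1-\alpha L_x|,|1-\alpha\sigma|)$ satisfies $r<1$: by the choice of $\alpha$ we have $|1-\alpha L_x|<1$, and since Assumption~\ref{assum:assumption}(e),(g) give $0<\sigma\le L_x$, we get $0<\alpha\sigma\le\alpha L_x<2$, hence $|1-\alpha\sigma|<1$ as well.

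Next, specialize \eqref{eq:time_wise_bound} to the constant step size, giving $e_{t+1}\le r\,e_t + \alpha\|\epsilon_t\| + \|x^*_{t+1}-x^*_t\|$, and take expectations. Using Lemma~\ref{lem:opt_drift} to bound $\|x^*_{t+1}-x^*_t\|\le v$ deterministically and Theorem~\ref{cor:error_bound} to bound $\mathbb{E}\{\|\epsilon_t\|\}=\mathbb{E}\{\|\nabla F_t(x_t)-\eta_t(x_t)\|\}\le\Delta$, I obtain
$$\mathbb{E}[e_{t+1}]\le r\,\mathbb{E}[e_t] + \alpha\Delta + v.$$
Unrolling this recursion yields $\mathbb{E}[e_t]\le r^t\,\mathbb{E}[e_0] + (\alpha\Delta+v)\sum_{j=0}^{t-1}r^j \le r^t\,\mathbb{E}[e_0] + \frac{\alpha\Delta+v}{1-r}$. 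Since the initialization $x_0$ is fixed, $\mathbb{E}[e_0]$ is finite, and because $r<1$ the transient term vanishes as $t\to\infty$, which establishes \eqref{eq:asymp_track_err}. For the estimation error, apply Lemma~\ref{lem:time_wise_est_bound} with the constant step size: $\bar e_t=\|x_{t+1}-x^*_t\|\le r\,e_t+\alpha\|\epsilon_t\|$, so $\mathbb{E}[\bar e_t]\le r\,\mathbb{E}[e_t]+\alpha\Delta$; taking $\limsup$ and substituting \eqref{eq:asymp_track_err} gives $\limsup_t\mathbb{E}[\bar e_t]\le \frac{r(v+\alpha\Delta)}{1-r}+\alpha\Delta=\frac{rv+\alpha\Delta}{1-r}$, which is \eqref{eq:asymp_est_err}.

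There is no genuine obstacle here; the argument is the standard stable-linear-recursion bound. The only points requiring care are (i) confirming $r<1$ over the stated step-size interval, (ii) legitimately replacing $\mathbb{E}\{\|\epsilon_t\|\}$ by the uniform bound $\Delta$ — this is exactly what Theorem~\ref{cor:error_bound} provides, including any dependence of $x_t$ on the samples used to form $\eta_t$ — and (iii) noting that the transient $r^t\mathbb{E}[e_0]$ term dies out, which is immediate for a deterministic initial point. Passing to $\limsup$ rather than $\lim$ is what lets us state the result without having to argue that $\mathbb{E}[e_t]$ itself converges.
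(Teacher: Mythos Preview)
Your proposal is correct and follows essentially the same approach as the paper: unroll the recursion \eqref{eq:time_wise_bound} after taking expectations and inserting the bounds $v$ and $\Delta$, then pass to the limit, and derive \eqref{eq:asymp_est_err} from \eqref{eq:asymp_track_err} via Lemma~\ref{lem:time_wise_est_bound}. The only cosmetic differences are your explicit verification that $r<1$ and your indexing from $t=0$ rather than $t=1$.
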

\begin{proof}
	Recursively applying \eqref{eq:time_wise_bound}, taking the expectation from both sides, and using the bounds $v$ and $\Delta$ from Lemma~\ref{lem:opt_drift} and Theorem~\ref{cor:error_bound}, result in
	$
	\mathbb{E}[ \|x_t-x^*_t\|]   \leq r^{t-1} \|x_1-x^*_1\| + (\alpha \Delta + v) \sum_{k=1}^{t-1} r^{k-1}.
	$
	The first inequality is obtained by computing the right hand-side of this equation as $t\rightarrow \infty$. The second inequality is a consequence of Lemma~\ref{lem:time_wise_est_bound} and \eqref{eq:asymp_track_err}.
\end{proof}

Lemma~\ref{lem:track} shows that we can closely follow the optimizers of the time-varying optimization problem in~\eqref{eq:main_problem}. We only require few samples in each iteration to do this. The effect of the number of the samples is hidden in the definition of $\Delta$ in Theorem~\ref{cor:error_bound}. The estimation and tracking error bounds in Lemma~\ref{lem:track} are only presented in the limit, i.e., as $t$ tends to infinity.

To provide  meaningful bounds on regret, we need to make the following assumption. Note that this assumption is a stronger version of Assumption~\ref{assum:assumption} (c). We \textit{will remove this assumption later} in the paper.

\begin{assumption}\label{assum:bounded_grad}
	For all $t$ and $x\in\mathcal{X}$, there exists a positive scalar $G$ such that $\|\nabla F_t(x)\| \leq G$.
\end{assumption}

\begin{theorem}\label{thm:regret}
	Let ${\textup{Reg}}_T^{\textup{Tracking}}:= \sum_{t=1}^T F_t(x_t) - F_t(x^*_t)$ and  ${\textup{Reg}}_T^{\textup{Estimation}}:= \sum_{t=1}^T F_t(x_{t+1}) - F_t(x^*_t)$ denote the tracking and estimation dynamic regrets, respectively. Then, under Assumptions \ref{assum:assumption} -- \ref{assum:bounded_grad},
	\begin{align}\label{eq:bound_track_thm}
	{\textup{Reg}}_T^{\textup{Tracking}} \leq \dfrac{G}{1-r}  \left [ (e_1 -r e_T - \alpha\|\epsilon_T\| ) + \alpha E_T + V_T \right ],
	\end{align}
	and
	\begin{align}\label{eq:bound_est_thm}
	{\textup{Reg}}_T^{\textup{Estimation}} \leq \dfrac{G}{1-r}  \left [ (\bar{e}_1 -r \bar{e}_T - \alpha \|\epsilon_1\| ) + \alpha E_T +r V_T \right ].
	\end{align}
	where $V_T=\sum_{t=1}^{T-1} \|x^*_{t+1} - x^*_t\|$ and $E_T=\sum_{t=1}^T \|\epsilon_k(x_t)\|$.
\end{theorem}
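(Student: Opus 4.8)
The plan is to reduce both regret bounds to finite geometric-sum estimates on the tracking error $e_t=\|x_t-x^*_t\|$ and the estimation error $\bar e_t=\|x_{t+1}-x^*_t\|$, the only genuinely new ingredient being convexity of $F_t$ combined with Assumption~\ref{assum:bounded_grad}. First I would use that each $F_t$ is convex and differentiable to write, for every $t$,
$F_t(x_t)-F_t(x^*_t)\le \langle \nabla F_t(x_t),\,x_t-x^*_t\rangle \le \|\nabla F_t(x_t)\|\,\|x_t-x^*_t\|\le G\,e_t$, and analogously $F_t(x_{t+1})-F_t(x^*_t)\le G\,\bar e_t$. Summing over $t=1,\dots,T$ gives $\textup{Reg}_T^{\textup{Tracking}}\le G\sum_{t=1}^T e_t$ and $\textup{Reg}_T^{\textup{Estimation}}\le G\sum_{t=1}^T \bar e_t$, so everything comes down to bounding these two sums. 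Note that since $\alpha\in(0,2/L_x)$ and $\sigma\le L_x$ (the latter because $\nabla_x^2 f\preceq L_x I$ by Assumption~\ref{assum:assumption}(e) and $\nabla_x^2 f\succeq \sigma I$ by (g)), we have $r=\max(|1-\alpha L_x|,|1-\alpha\sigma|)<1$, so $1-r>0$ and the geometric sums can be closed in finite-horizon form without any asymptotics.

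For the tracking sum, I would invoke the one-step inequality \eqref{eq:time_wise_bound} with constant step size, i.e. $e_{t+1}\le r\,e_t+\alpha\|\epsilon_t\|+\|x^*_{t+1}-x^*_t\|$. Writing $S:=\sum_{t=1}^T e_t$ and summing this over $t=1,\dots,T-1$ gives $S-e_1\le r(S-e_T)+\alpha\sum_{t=1}^{T-1}\|\epsilon_t\|+V_T$. Using $\sum_{t=1}^{T-1}\|\epsilon_t\|=E_T-\|\epsilon_T\|$ and solving for $S$ yields $S\le \tfrac{1}{1-r}\big[(e_1-r e_T-\alpha\|\epsilon_T\|)+\alpha E_T+V_T\big]$; multiplying by $G$ gives \eqref{eq:bound_track_thm}. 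For the estimation sum, the recursion must be rewritten in terms of the $\bar e_t$'s: from Lemma~\ref{lem:time_wise_est_bound}, $\bar e_t\le r\,e_t+\alpha\|\epsilon_t\|$, and the triangle inequality gives $e_t\le \bar e_{t-1}+\|x^*_t-x^*_{t-1}\|$ for $t\ge 2$, so $\bar e_t\le r\,\bar e_{t-1}+r\|x^*_t-x^*_{t-1}\|+\alpha\|\epsilon_t\|$. Setting $\bar S:=\sum_{t=1}^T \bar e_t$ and summing over $t=2,\dots,T$ gives $\bar S-\bar e_1\le r(\bar S-\bar e_T)+rV_T+\alpha(E_T-\|\epsilon_1\|)$; solving for $\bar S$ and multiplying by $G$ yields \eqref{eq:bound_est_thm}.

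The routine part is the bookkeeping of summation limits and the correct identification of the boundary terms $e_1,e_T,\bar e_1,\bar e_T,\|\epsilon_1\|,\|\epsilon_T\|$. The only substantive points are the convexity plus Cauchy--Schwarz step, which is precisely where Assumption~\ref{assum:bounded_grad} is used (and the reason it is needed), and the observation that $r<1$ so the sums telescope into a clean finite-$T$ bound. I do not expect a real obstacle; the one place to be careful is to unroll the estimation recursion starting at $t=2$ rather than $t=1$, so that the $\alpha\|\epsilon_1\|$ and $r V_T$ terms emerge with exactly the coefficients stated in \eqref{eq:bound_est_thm}.
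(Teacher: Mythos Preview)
Your proposal is correct and follows essentially the same route as the paper: the paper also first proves the two summation bounds $\sum_t e_t$ and $\sum_t \bar e_t$ as auxiliary lemmas (via exactly the recursions $e_{t+1}\le r e_t+\alpha\|\epsilon_t\|+\|x^*_{t+1}-x^*_t\|$ and $\bar e_t\le r\bar e_{t-1}+r\|x^*_t-x^*_{t-1}\|+\alpha\|\epsilon_t\|$ that you derive), and then combines them with the convexity/Cauchy--Schwarz/Assumption~\ref{assum:bounded_grad} step $F_t(x_t)-F_t(x^*_t)\le G e_t$. Your bookkeeping of the boundary terms and your justification that $r<1$ are both accurate.
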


\begin{proof}
	See Appendix~\ref{proof:thm:regret}.
\end{proof}
The regret bounds presented in Theorem~\ref{thm:regret} depend on $V_T$ and $E_T$. Without extra (anti-causal) information or enforcing extra assumptions on the problem structure, it is impossible improve the dependence of the bound on these values. Obviously, if $E_T$ and $V_T$ are  sub-linear in $T$, the presented regret bounds will be sublinear in $T$ as well.

The right-hand side of \eqref{eq:bound_track_thm} and \eqref{eq:bound_est_thm} are upper bounded  by $$U_T^{\textup{Tracking}}(\alpha) := \dfrac{G}{1-r}  \left [ (e_1 - \alpha\|\epsilon_T\| ) + \alpha E_T + V_T \right ]$$ and
$$U_T^{\textup{Estimation}}(\alpha) :=\dfrac{G}{1-r}  \left [ (\bar{e}_1 - \alpha \|\epsilon_1\| ) + \alpha E_T +r V_T \right ],$$ respectively. Using Lemma~\ref{lem:time_wise_est_bound} and after some algebraic manipulations it can be observed that the choice of $\alpha = 2/(L_x +\sigma)$ minimizes these bounds.
This is the same choice of $\alpha$ that minimizes $r$ and consequently the right-hand side of the inequalities given in Lemma~\ref{lem:track}.

Note that implementing \eqref{eq:grad_proj_prox} where $h(x)=I_\mathcal{X}(x)$ requires computing a projection onto $\mathcal{X}$. Computing this projection for a general set might not be amenable to an online implementation. Thus, to propose a method suitable for online Optimization for a broader family of constraints we make the following assumption.
We have the following assumption.

\begin{assumption}\label{assum:constraints}
	Let $\mathcal{X} = \left \{x|c_i(x) \geq 0, i\in\mathcal{I} \right \}$ where $\mathcal{I} = \{1,\dots,n_c\}$ is the set of indices for the constraints with $n_c$ being a positive integer, $c_i(x)$, $i\in\mathcal{I}$, are continuously differentiable and Lipschitz with Lipschitz constant $L_c$.
\end{assumption}

\begin{remark}
	Affine constraints of the form $c_i(x):= a_i^\top x - b_i$, $\forall i\in\mathcal{I}$,   satisfy Assumption~\ref{assum:constraints} for $L_c=\max_{i\in\mathcal{I}} \|a_i\|$.
\end{remark}

In what follows instead of \eqref{eq:main_problem}, we consider the following time-varying problem
\begin{align}
\label{eq:l1pen_problem}
\min_{x\in \mathbb{R}^{n_x}} F_t(x) + h(x),
\end{align}
where $h(x)= \lambda \sum_{i=1}^m \max (- c_i(x), 0 )$. It is straightforward to see
\begin{align}
|h(x) - h(x')| \leq \lambda n_c L_c \|x-x'\|,\quad \forall x,x'\in\mathbb{R}^{n_x}.
\end{align}
From~\cite{nocedal2006numerical,pietrzykowski1969exact,zangwill1967non}, we know that there exists a bounded $\bar{\lambda}$ such that for all $\lambda \geq \bar{\lambda}$, $\bar{x}_t = x^*_t$ where $\bar{x}_t = \arg  \min_{x\in \mathbb{R}^{n_x}} F_t(x) + h(x)$.
We state the following result.
\begin{lemma} \label{lem:dont_know_what_to_call_it}
	Let $\Phi_t(x) = F_t(x) + h(x)$ and $x_t$ be the sequence generated by \eqref{eq:grad_approx} for \eqref{eq:l1pen_problem} and $\varphi_t$ be an arbitrary element of the subdifferential of $\Phi_t$ at $x_t$ denoted by $\partial \Phi_t(x_t)$. Then,
	\begin{align}
	\mathbb{E} [  \|\varphi_t\|] \leq \overline{G},
	\end{align}
	where \begin{align}\label{eq:subgrad_bound}
	\overline{G} = \dfrac{1+r}{\alpha} \left [ r e_1 + \alpha \Delta + \dfrac{\alpha \Delta + v}{1-r} \right ] + 2\Delta +2\lambda n_c L_c.
	\end{align}
\end{lemma}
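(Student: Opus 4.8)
The plan is to bound $\|\varphi_t\|$ by relating an arbitrary subgradient $\varphi_t \in \partial \Phi_t(x_t)$ to the quantities that the proximal-gradient step already controls, namely $e_t = \|x_t - x^*_t\|$ and $\|\epsilon_t\|$. First I would recall that the iteration \eqref{eq:grad_proj_prox} with $h$ as in \eqref{eq:l1pen_problem} gives the optimality condition for the prox: $0 \in \partial h(x_{t+1}) + \tfrac{1}{\alpha}(x_{t+1} - x_t + \alpha\eta_t(x_t))$, i.e. $\tfrac{1}{\alpha}(x_t - x_{t+1}) - \eta_t(x_t) \in \partial h(x_{t+1})$. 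Since $\Phi_t = F_t + h$ and $F_t$ is differentiable, a generic element of $\partial\Phi_t(x_t)$ has the form $\nabla F_t(x_t) + g_t$ with $g_t \in \partial h(x_t)$. I would control $g_t$ by writing it as a perturbation of the prox-generated subgradient at $x_{t+1}$: by the Lipschitz property of $h$, every element of $\partial h$ has norm at most $\lambda n_c L_c$, so in fact $\|g_t\| \le \lambda n_c L_c$ directly. Then $\|\varphi_t\| \le \|\nabla F_t(x_t)\| + \lambda n_c L_c$, and the work reduces to bounding $\mathbb{E}\|\nabla F_t(x_t)\|$.

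To bound $\mathbb{E}\|\nabla F_t(x_t)\|$ without Assumption~\ref{assum:bounded_grad}, I would use the first-order optimality of $x^*_t$ and the smoothness/strong-convexity of $F_t$ together with the magnitude of the prox step. The cleanest route: since $\bar x_t = x^*_t$ is the unconstrained minimizer of $\Phi_t$ (for $\lambda \ge \bar\lambda$), there is $g^*_t \in \partial h(x^*_t)$ with $\nabla F_t(x^*_t) + g^*_t = 0$, hence $\|\nabla F_t(x^*_t)\| = \|g^*_t\| \le \lambda n_c L_c$. By $L_x$-Lipschitzness of $\nabla F_t$ (Lemma~\ref{lemma:Lipschitzness}), $\|\nabla F_t(x_t)\| \le \|\nabla F_t(x^*_t)\| + L_x e_t \le \lambda n_c L_c + L_x e_t$. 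However, the claimed bound \eqref{eq:subgrad_bound} is structured in terms of $\tfrac{1+r}{\alpha}[\,r e_1 + \alpha\Delta + \tfrac{\alpha\Delta + v}{1-r}\,] + 2\Delta + 2\lambda n_c L_c$, which suggests the authors instead express everything through the prox-step displacement. So more likely the intended argument writes $\varphi_t$ via the near-stationarity of the proximal step: $\|x_{t+1} - x_t\|/\alpha$ plus $\|\epsilon_t\|$ controls the "residual" subgradient, and then $\|x_{t+1}-x_t\| \le \|x_{t+1}-x^*_t\| + \|x^*_t - x_t\| \le r_t e_t + \alpha\|\epsilon_t\| + e_t = (1+r)e_t + \alpha\|\epsilon_t\|$ by Lemma~\ref{lem:time_wise_est_bound}. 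Taking expectations and using $\mathbb{E}\|\epsilon_t\| \le \Delta$ (Theorem~\ref{cor:error_bound}) plus the recursive bound $\mathbb{E}[e_t] \le r^{t-1}e_1 + (\alpha\Delta+v)/(1-r)$ from the proof of Lemma~\ref{lem:track} (uniformly bounded by $r e_1 + (\alpha\Delta+v)/(1-r)$ after noting $r^{t-1} \le r$ for $t \ge 2$ and handling $t=1$ separately) yields a term of the shape $\tfrac{1+r}{\alpha}[r e_1 + (\alpha\Delta+v)/(1-r)] + (1+r)\Delta$.

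Assembling: a generic $\varphi_t \in \partial\Phi_t(x_t)$ equals $\nabla F_t(x_t) + g_t$ with $g_t \in \partial h(x_t)$. Decompose $\nabla F_t(x_t) = \eta_t(x_t) + \epsilon_t$, and express $\eta_t(x_t) + \tilde g_{t+1} = -(x_{t+1}-x_t)/\alpha$ for the prox subgradient $\tilde g_{t+1}\in\partial h(x_{t+1})$. Then $\varphi_t = -(x_{t+1}-x_t)/\alpha + \epsilon_t + (g_t - \tilde g_{t+1})$, so $\|\varphi_t\| \le \|x_{t+1}-x_t\|/\alpha + \|\epsilon_t\| + \|g_t\| + \|\tilde g_{t+1}\| \le \|x_{t+1}-x_t\|/\alpha + \|\epsilon_t\| + 2\lambda n_c L_c$. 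Now insert the displacement bound and take expectations; using $\mathbb{E}\|\epsilon_t\|\le\Delta$ twice (once explicitly, once inside $\|x_{t+1}-x_t\|$) and the uniform bound on $\mathbb{E}[e_t]$ gives exactly \eqref{eq:subgrad_bound}. I expect the main obstacle to be the bookkeeping of constants: matching the $2\Delta$ versus $(1+r)\Delta$ and the precise placement of the factor $(1+r)/\alpha$, and in particular justifying that $\mathbb{E}[e_t]$ is uniformly bounded by the stated quantity for all $t$ (the $t=1$ base case gives $e_1$ which, since $r < 1$, is dominated appropriately; a clean way is to note $\mathbb{E}[e_t] \le e_1 + (\alpha\Delta+v)/(1-r)$ and then absorb constants, though the stated bound writes $r e_1$, so one must be slightly careful that $e_1$ is deterministic and that the recursion's first step contributes $r e_1$ rather than $e_1$ after one prox update — consistent with bounding $\mathbb{E}\|x_{t+1}-x_t\|$ rather than $\mathbb{E}\|x_t - x^*_t\|$ at the very first index). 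None of the steps is deep; the content is just combining Lemmas~\ref{lemma:Lipschitzness}, \ref{lem:time_wise_est_bound}, the recursion inside Lemma~\ref{lem:track}, and Theorem~\ref{cor:error_bound} with the Lipschitz bound $\|\partial h\| \le \lambda n_c L_c$.
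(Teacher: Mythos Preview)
Your proposal is correct and follows essentially the same argument as the paper: both use the prox optimality condition to write a subgradient identity, bound $\|x_{t+1}-x_t\|/\alpha$ through the one-step contraction of Lemma~\ref{lem:time_wise_est_bound}, and then add back $\|\epsilon_t\|$ together with two copies of the bound $\|\partial h\|\le \lambda n_c L_c$. The only cosmetic difference is that the paper routes $\|x_{t+1}-x_t\|$ through the estimation error $\bar e_{t-1}=\|x_t-x^*_{t-1}\|$ (and then uses $\mathbb{E}[\bar e_1]\le r e_1+\alpha\Delta$, which is where the extra $\alpha\Delta$ inside the bracket of \eqref{eq:subgrad_bound} comes from), whereas you route directly through $e_t=\|x_t-x^*_t\|$; your bound is therefore marginally tighter but still implies \eqref{eq:subgrad_bound}, and your caveat about the $t=1$ case applies equally to the paper's derivation.
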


\begin{proof}
	See Appendix~\ref{proof:lem:dont_know_what_to_call_it}.
\end{proof}

The following theorem does not require the strong assumption in Assumption~\ref{assum:bounded_grad}. This makes the regret bounds more versatile.

\begin{theorem}\label{thm:regret_l1pen}
	Let $\overline{\textup{Reg}}_T^{\textup{Tracking}}:= \sum_{t=1}^T \mathbb{E} [\Phi_t(x_t)] - \Phi_t(x^*_t)$ and  $\overline{\textup{Reg}}_T^{\textup{Estimation}}:= \sum_{t=1}^T \mathbb{E} [\Phi_t(x_{t+1})] - \Phi_t(x^*_t)$ denote the tracking and estimation dynamic regrets, respectively.
	Then, under Assumptions \ref{assum:assumption} -- \ref{assum:subgaussian} and~\ref{assum:constraints},
	\begin{align}
	\overline{\textup{Reg}}_T^{\textup{Tracking}} \leq \dfrac{\overline{G}}{1-r}  \left [ e_1  -\alpha  \mathbb{E} [\|\epsilon_T\|] + \alpha \overline{E}_T +V_T\|\right ],
	\end{align}
	and
	\begin{align}
	\overline{\textup{Reg}}_T^{\textup{Estimation}} \leq \dfrac{\overline{G}}{1-r}  \left [ r e_1  + \alpha  \overline{E}_T +r V_T\right ],
	\end{align}
	where $\overline{G}$ is given in \eqref{eq:subgrad_bound}, $V_T=\sum_{t=1}^{T-1} \|x^*_{t+1} - x^*_t\|$, and $ \overline{E}_T=\sum_{t=1}^T \mathbb{E} [\|\epsilon_t(x_t)\|]$.
\end{theorem}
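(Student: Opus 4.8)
The plan is to mimic the proof of Theorem~\ref{thm:regret} but replace the uniform gradient bound $G$ from Assumption~\ref{assum:bounded_grad} with the expected subgradient bound $\overline{G}$ from Lemma~\ref{lem:dont_know_what_to_call_it}. First I would use convexity of $\Phi_t$: since $\varphi_t \in \partial\Phi_t(x_t)$, we have $\Phi_t(x_t) - \Phi_t(x^*_t) \leq \langle \varphi_t, x_t - x^*_t\rangle \leq \|\varphi_t\|\,\|x_t-x^*_t\| = \|\varphi_t\|\,e_t$. Taking expectations and applying Cauchy--Schwarz (or rather, conditioning: $\|\varphi_t\|$ and $e_t$ need to be decoupled carefully — I would argue that the bound $\mathbb{E}[\|\varphi_t\|]\le\overline{G}$ holds and then bound $\mathbb{E}[\|\varphi_t\|e_t]$ by $\overline{G}\,\mathbb{E}[e_t]$ if independence or a worst-case pathwise bound on $\|\varphi_t\|$ is available, otherwise via the pathwise subgradient bound used inside Lemma~\ref{lem:dont_know_what_to_call_it}'s proof). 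Summing over $t=1,\dots,T$ yields $\overline{\textup{Reg}}_T^{\textup{Tracking}} \leq \overline{G}\sum_{t=1}^T \mathbb{E}[e_t]$, and similarly $\overline{\textup{Reg}}_T^{\textup{Estimation}} \leq \overline{G}\sum_{t=1}^T \mathbb{E}[\bar{e}_t]$.

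Next I would bound the cumulative tracking error $\sum_{t=1}^T \mathbb{E}[e_t]$. Starting from the recursion in \eqref{eq:time_wise_bound}, namely $e_{t+1} \leq r e_t + \alpha\|\epsilon_t\| + \|x^*_{t+1}-x^*_t\|$, I would sum both sides over $t=1,\dots,T-1$ and take expectations. This gives $\sum_{t=2}^{T} \mathbb{E}[e_t] \leq r\sum_{t=1}^{T-1}\mathbb{E}[e_t] + \alpha\sum_{t=1}^{T-1}\mathbb{E}[\|\epsilon_t\|] + V_T$, and rearranging (adding $\mathbb{E}[e_1]$ to the left, pulling the $r\sum$ to the left) produces $(1-r)\sum_{t=1}^{T}\mathbb{E}[e_t] \leq e_1 - r\,\mathbb{E}[e_T] + \alpha\overline{E}_T + V_T \leq e_1 + \alpha\overline{E}_T + V_T$ — but to match the stated bound with the $-\alpha\mathbb{E}[\|\epsilon_T\|]$ term I would instead keep one more term: note $\overline{E}_T = \sum_{t=1}^{T}\mathbb{E}[\|\epsilon_t\|]$ whereas the recursion only uses $\epsilon_1,\dots,\epsilon_{T-1}$, so $\sum_{t=1}^{T-1}\mathbb{E}[\|\epsilon_t\|] = \overline{E}_T - \mathbb{E}[\|\epsilon_T\|]$, which is exactly where the $-\alpha\mathbb{E}[\|\epsilon_T\|]$ comes from. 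Dividing by $(1-r)$ and multiplying by $\overline{G}$ gives the tracking bound.

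For the estimation regret, I would use $\bar{e}_t = \|x_{t+1}-x^*_t\| \leq r e_t + \alpha\|\epsilon_t\|$ from Lemma~\ref{lem:time_wise_est_bound}. Summing, $\sum_{t=1}^{T}\mathbb{E}[\bar{e}_t] \leq r\sum_{t=1}^{T}\mathbb{E}[e_t] + \alpha\overline{E}_T$. Substituting the already-derived bound $\sum_{t=1}^{T}\mathbb{E}[e_t] \leq (e_1 + \alpha\overline{E}_T + V_T)/(1-r)$ (here using the cruder form without the $-r\mathbb{E}[e_T]$ refinement so that the $r e_1$, $rV_T$ structure emerges; note $r\cdot(e_1+\alpha\overline{E}_T+V_T)/(1-r) + \alpha\overline{E}_T = (re_1 + \alpha\overline{E}_T + rV_T)/(1-r)$ after combining the $\alpha\overline{E}_T$ terms), then multiplying by $\overline{G}$ yields the claimed estimation bound.

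The main obstacle is the first step: handling $\mathbb{E}[\|\varphi_t\|\,e_t]$ rather than $\mathbb{E}[\|\varphi_t\|]\cdot\mathbb{E}[e_t]$, since $\varphi_t$ and $e_t=\|x_t-x^*_t\|$ are not independent (both depend on the sample history up to time $t$). I expect the resolution is to exploit the pathwise structure used in the proof of Lemma~\ref{lem:dont_know_what_to_call_it}: there $\|\varphi_t\|$ is bounded in terms of $e_t$, $e_{t+1}$, $\|\epsilon_t\|$ and constants via the optimality/proximal inequality, so one keeps the product $\|\varphi_t\| e_t$ intact, applies that pathwise bound, takes expectations at the end, and telescopes — arriving at the same $\overline{G}/(1-r)$ prefactor. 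Alternatively, if the intended reading is that $\overline{G}$ is a genuine almost-sure bound (which \eqref{eq:subgrad_bound} with the $\limsup$-type quantities suggests it is meant to dominate $\mathbb{E}[\|\varphi_t\|]$ uniformly), then decoupling is immediate and the rest is the routine summation above. I would state which interpretation I adopt and carry the constants accordingly; everything after that first step is bookkeeping with the geometric factor $r$.
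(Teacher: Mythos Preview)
Your proposal is correct and follows essentially the same route as the paper's own proof, which is a one-line reference back to Theorem~\ref{thm:regret}: replace the gradient by a subgradient of $\Phi_t$, take expectations, and sum via the cumulative-error bounds of Lemmas~\ref{lem:sum_track} and~\ref{lem:sum_est} (which you effectively re-derive). The paper dispatches your ``main obstacle'' --- decoupling $\mathbb{E}[\|\varphi_t\|\,e_t]$ --- by a bare appeal to ``the Cauchy--Schwarz inequality for expectation,'' so your more careful discussion of that step actually goes beyond what the paper itself provides.
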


\begin{proof}
	The proof is nearly identical to that of Theorem~\ref{thm:regret} with the exception that in the first line, one has to use a subgradient instead of the gradient. The rest of the result follows from taking the expectation from both sides of the inequality and the Cauchy-Schwarz inequality for expectation.\end{proof}

Next, we introduce a problem modification in the form of a \emph{constraint tightening} that results in an asymptotically feasible solution. Note that encoding the constraints as a penalty function as described in \eqref{eq:l1pen_problem} only guarantees feasibility of the optimal solution at each step. However, in this scenario we might be interested in the feasibility of the iterates $x_t$ generated by iteration of the form \eqref{eq:grad_proj_prox}. From \eqref{lem:track}, we know that in the limit the iterates will remain in some neighbourhood of the optimal points of each problem. In the next corollary this fact is exploited to define a new problem with tightened constraints (a smaller feasible set) that ensures feasibility of the iterates with high probability.
\begin{corollary}
	Let the sequence $x_t$ be generated by $$x_{t+1} = \prox{h_\theta}{\alpha}{x_t - \alpha \eta_t(x_t)},$$ where $h_\theta(x)= \lambda \sum_{i=1}^m \max (- c_i(x) + \theta (rv + \alpha \Delta)/({1-r}), 0 )$ for some positive scalar $\theta$ and $\lambda$ be selected such that the solution to
	$\min_{x\in \mathbb{R}^{n_x}} F_t(x) + h_\theta(x)$ is the same as the solution to $\min_{x\in \mathcal{X}_{\theta}} F_t(x)$ with $\mathcal{X}_{\theta} = \left \{x|c_i(x) \geq \theta({rv + \alpha \Delta})/({1-r}), i\in\mathcal{I} \right \}$.
	Then there exists a positive integer $\tau$ such that $\mathbb{P}(x_t \in\mathcal{X})$ for all $t\geq \tau$ is at least $1-1/\theta$.
\end{corollary}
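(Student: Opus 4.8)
The plan is to transfer the asymptotic estimation-error bound of Lemma~\ref{lem:track} to the \emph{tightened} penalized problem and then turn the resulting ``$x_t$ is close, in expectation, to a point of $\mathcal{X}_\theta$'' statement into a high-probability feasibility statement via Markov's inequality together with Lipschitzness of the $c_i$. Throughout, write $\bar d := (rv+\alpha\Delta)/(1-r)$ for the right-hand side of \eqref{eq:asymp_est_err}, so that the tightening margin is exactly $\theta\bar d$.

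First I would argue that the tightened problem behaves exactly like the original one as far as Lemmas~\ref{lem:opt_drift}--\ref{lem:track} are concerned. Since each $-c_i$ is convex (so that $\mathcal{X}$ and $\mathcal{X}_\theta$ are convex) and $L_c$-Lipschitz, $h_\theta$ is again closed, proper, convex, and Lipschitz, now with constant $\lambda n_c L_c$; hence Lemmas~\ref{lem:time_wise_est_bound} and~\ref{lem:track} apply verbatim to $\Phi_t^\theta(x):=F_t(x)+h_\theta(x)$. Crucially, $v$ from Lemma~\ref{lem:opt_drift} and $\Delta$ from Theorem~\ref{cor:error_bound} are unchanged, since neither depends on the constraint set, and $r$ depends only on $\alpha,L_x,\sigma$. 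By the stated choice of $\lambda$, the minimizer $y^*_t$ of $\Phi_t^\theta$ equals $\argmin_{x\in\mathcal{X}_\theta}F_t(x)$, so in particular $y^*_t\in\mathcal{X}_\theta$, i.e. $c_i(y^*_t)\ge\theta\bar d$ for every $i\in\mathcal{I}$ and every $t$.

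Next I would apply the estimation-error part of Lemma~\ref{lem:track} to $\Phi^\theta_t$. Unrolling the recursion of \eqref{eq:time_wise_bound} exactly as in that proof gives $\mathbb{E}[\|x_{t+1}-y^*_t\|]\le r^{t}e_1+\bar d$, so for any prescribed $\epsilon>0$ there is a $\tau$ with $\mathbb{E}[\|x_t-y^*_{t-1}\|]\le\bar d+\epsilon$ for all $t\ge\tau$. Markov's inequality then gives $\mathbb{P}(\|x_t-y^*_{t-1}\|\ge\theta\bar d)\le(\bar d+\epsilon)/(\theta\bar d)$, which is $\le1/\theta$ after absorbing $\epsilon$ into the choice of $\tau$. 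On the complementary event $\{\|x_t-y^*_{t-1}\|<\theta\bar d\}$ I would use Lipschitzness of each $c_i$ and $y^*_{t-1}\in\mathcal{X}_\theta$ to write $c_i(x_t)\ge c_i(y^*_{t-1})-L_c\|x_t-y^*_{t-1}\|\ge\theta\bar d-L_c\|x_t-y^*_{t-1}\|\ge0$, simultaneously for all $i\in\mathcal{I}$; note that a single distance bound controls every constraint at once, so no union bound over $i$ is incurred. Hence $x_t\in\mathcal{X}$ on an event of probability at least $1-1/\theta$ for all $t\ge\tau$, which is the claim.

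The obstacles here are bookkeeping rather than conceptual. One must check that the appendix proof of Lemma~\ref{lem:opt_drift} uses only strong convexity and the optimality condition over a generic closed convex set, so that the very same $v$ is valid for $\mathcal{X}_\theta$; one must also ensure that a finite exact-penalty parameter $\bar\lambda$ still exists for the tightened problem, which requires $\mathcal{X}_\theta$ to be nonempty, i.e. a Slater-type interior condition on $\mathcal{X}$ together with $\theta\bar d$ not too large. The main subtlety in Step~3 is the Lipschitz constant: the chain of inequalities is clean only if $L_c\le 1$; in general the tightening should read $\theta L_c\bar d$ (with the Markov threshold adjusted correspondingly), and tracking this factor is the place where care is needed. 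Finally, the $r^{t}e_1$ transient forces the $\epsilon$-slack in Step~2 to be absorbed into $\tau$, which is exactly why the conclusion is asymptotic (``for all $t\ge\tau$'') rather than uniform in $t$.
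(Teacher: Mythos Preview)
The paper supplies no proof for this corollary; the text preceding it only hints that Lemma~\ref{lem:track} is to be combined with the tightening. Your route---carry Lemma~\ref{lem:track} over to the tightened penalized problem (same $v$, $\Delta$, $r$), apply Markov's inequality to the estimation error, then use Lipschitzness of the $c_i$ to pass from a neighbourhood of $\mathcal{X}_\theta$ to membership in $\mathcal{X}$---is exactly that intended argument, and the two caveats you already flag (the missing $L_c$ factor in the tightening margin and the $r^{t}e_1$ transient that keeps the Markov bound strictly above $1/\theta$ for any finite $\tau$) are imprecisions in the corollary as stated rather than gaps in your reasoning.
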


%

\begin{figure}
	\centering
	\begin{tikzpicture}
	\node[] at (0,0) {\includegraphics[width=0.6\linewidth]{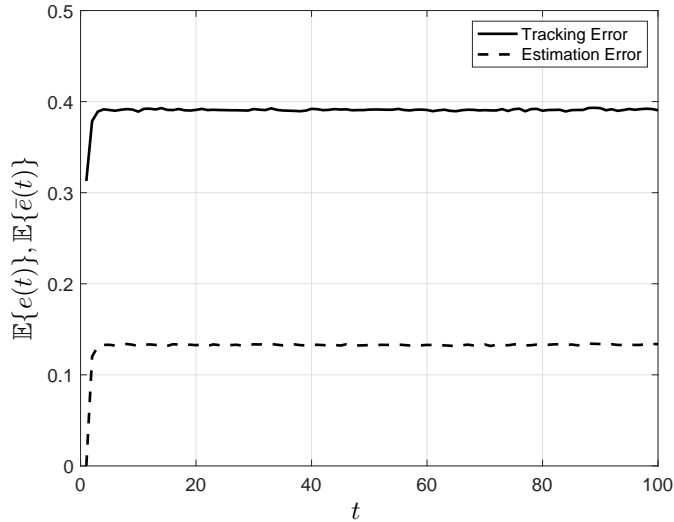}};
	\node[] at (0,-3.5) {$t$};
	\node[rotate=90] at (-4.4,0) {$\mathbb{E}\{e(t)\},\mathbb{E}\{\bar{e}(t)\}$};
	\end{tikzpicture}
	\caption{ \label{fig:simulation}
		Expected tracking Error $\mathbb{E}\{e(t)\}$ and estimation error $\mathbb{E}\{\bar{e}(t)\}$ versus time $t$.
		Expectations are approximated with 10,000 Monte Carlo simulations.
	}
\end{figure}

\section{Illustrative Example}
In this section, we consider an illustrative example with the aid of synthetic truncated Gaussian data. We say that random variable $x$ is distributed according to the truncated Gaussian random $\mathcal{N}_{\mathrm{trunc}}(\mu,\sigma,a_{\min},a_{\max})$ if its probability density function is given by
\begin{align*}
\left[\int_{a_{\min}}^{a_{\max}} \exp\left(-\frac{(x-\mu)^2}{2\sigma^2}\right) \mathrm{d}x \right]^{-1}\exp\left(-\frac{(x-\mu)^2}{2\sigma^2} \right)\mathds{1}_{a_{\min}\leq x\leq a_{\max}}.
\end{align*}
Consider a linear regression machine learning problem with input $w_1\in\mathbb{R}^{2}$ and output $w_2\in\mathbb{R}$. 
At time $t$, each entry of $w_1\in\mathbb{R}^2$ is a truncated Gaussian $\mathcal{N}_{\mathrm{trunc}}(0,1,-1,1)$ and $w_2$ is a truncated Gaussian $\mathcal{N}_{\mathrm{trunc}}(c_t w_1,1,-1,1)$ with
$c_t=
\begin{bmatrix}
\cos(\pi t/10) &
\sin(\pi t/10)
\end{bmatrix}.$ We use the fitness function $f(x,w)=\lambda \|x\|_2^2 +\|w_2-x^\top w_1\|_2^2$ for regression with $\lambda=10^{-2}$.  Now, we can check all items in Assumption~\ref{assum:assumption}.
Clearly, $f(x,w)$ is twice continuously differentiable with respect to $x$.
Also, $f(x,w)$, $\partial f(x,w)/\partial x_i$, $\partial^2 f(x,w)/\partial x_i\partial x_j$ are integrable with respect to $w$ for all $i,j$ and $\mathbb{E}^{w\sim \mathbb{P}_t}\{|f(x,w)|\}$, $\mathbb{E}^{w\sim \mathbb{P}_t}\{|\partial f(x,w)/\partial x_i|\}$, and $\mathbb{E}^{w\sim
	\mathbb{P}_t}\{|\partial^2 f(x,w)/\partial x_i\partial x_j|\}$ are bounded for all $i$, $j$, and $t$ due to continuity of derivatives and bounded support of the random variables. Note that $\nabla_x f(x,w)=2\lambda x -2(w_2-x^\top w_1)w_1$. We have $\|\nabla_x f(x,w)-\nabla_x f(x',w)\|\leq L_x\|x-x'\|$ for all $w$ with $L_x=2\lambda +  4$. Furthermore, if $\mathcal{X}=[-x_{\max},x_{\max}]^2$, we get $L_w=2(\lambda+2) x_{\max}+4$.
Also, $\nabla_x^2 f(x,w)=2\lambda I+w_1w_1^\top\succeq 2\lambda I$ with $\lambda>0$. We also meet Assumption~\ref{assum:subgaussian} for any $a>1$ because $w$ has bounded support. The final condition that we need to check is that  $\mathfrak{W}(\mathbb{P}_{t+1},\mathbb{P}_t)\leq \rho$ for all $t\in\mathbb{N}$. We find the value of $\rho$ for this condition numerically.
For each $t$, we generate $J\gg 1$ samples $\{w^{(j,t)}\}_{j=1}^J$ from $\mathbb{P}^t$ and then approximate $\rho$ by computing
\begin{align*}
\rho \approx \max_{1\leq t\leq  20}\mathfrak{W}\left(\frac{1}{J}\sum_{j=1}^J \delta_{w^{(j,t)}},\frac{1}{J}\sum_{j=1}^J \delta_{w^{(j,t+1)}}\right).
\end{align*}
Note that $c_t$ are periodic with period of $20$, i.e., $c_{t+20}=c_t$ for all $t$. This follows from that the first and the third term in the following inequality can be made arbitrary small for large $J$ with high probability~\cite{fournier2015rate}:
\begin{align*}
\mathfrak{W}(\mathbb{P}_{t},\mathbb{P}_{t+1}) \leq & \mathfrak{W}\left(\frac{1}{J}\sum_{j=1}^J \delta_{w^{(j,t)}},\mathbb{P}_{t}\right)+ \mathfrak{W}\left(\frac{1}{J}\sum_{j=1}^J \delta_{w^{(j,t)}},\frac{1}{J}\sum_{j=1}^J \delta_{w^{(j,t+1)}}\right)\\
&+\mathfrak{W}\left(\mathbb{P}_{t+1},\frac{1}{J}\sum_{j=1}^J \delta_{w^{(j,t+1)}}\right).
\end{align*}
Following this procedure, we get $\rho\approx0.33$ with $J=500$. In our simulations, we select $x_{\max}=2$. Let us select the step size $\alpha=0.4\in(0,0.4975)=(0,2/L_x)$. Figure~\ref{fig:simulation} shows expected tracking Error $\mathbb{E}\{e(t)\}$ and estimation error $\mathbb{E}\{\bar{e}(t)\}$ versus time $t$. As expected, the errors remain bounded; see Lemma~\ref{lem:track}.

\section{Conclusions and Future Work}
In this paper, we considered online stochastic optimization with drifting distributions. We  presented conditions on probability distributions, using the Wasserstein distance from the framework of data-driven distributionally-robust optimization, to model time-varying environments. We considered online proximal-gradient method to track the minimizers of expectations of smooth convex functions. We provided bounds on errors and regrets under strong convexity, Lipschitzness of the gradient, and bounds on the probability distribution drift. Noting that computing projections for a general feasible sets might not be amenable to online implementation (due to computational constraints), we proposed an exact penalty method. Following this, we relaxed the uniform boundedness of the gradient.  Future work can focus on online stochastic optimization over networks.

%

\bibliographystyle{ieeetr}
\bibliography{citation}

\appendix

\section{Proof of Lemma~\ref{lemma:Lipschitzness}}
\label{proof:lemma:Lipschitzness}
Note that
\begin{align*}
\|\nabla_x F_t(x)-\nabla_x F_t(x')\|
&=\|\nabla_x\mathbb{E}^{w\sim \mathbb{P}_t}\{f(x,w)\}-\nabla_x \mathbb{E}^{w\sim \mathbb{P}_t}\{f(x',w)\}\|\\
&=\|\mathbb{E}^{w\sim \mathbb{P}_t}\{\nabla_x f(x,w)\}- \mathbb{E}^{w\sim \mathbb{P}_t}\{\nabla_x f(x',w)\}\|,
\end{align*}
where the second inequality follows from Assumption~\ref{assum:assumption} (a)--(c). Furthermore,
\begin{align*}
\|\mathbb{E}^{w\sim \mathbb{P}_t}\{\nabla_x f(x,w)\}- \mathbb{E}^{w\sim \mathbb{P}_t}\{\nabla_x f(x',w)\}\|
&=\|\mathbb{E}^{w\sim \mathbb{P}_t}\{\nabla_x f(x,w)-\nabla_x f(x',w)\}\|\\
&\leq \mathbb{E}^{w\sim \mathbb{P}_t}\{\|\nabla_x f(x,w)-\nabla_x f(x',w)\|\}\\
&\leq L_x\|x-x'\|,
\end{align*}
where the first inequality follows from the Jensen's inequality and the second inequality is a consequence of Assumption~\ref{assum:assumption}~(e).

\section{Proof of Lemma~\ref{lem:opt_drift}}
\label{proof:lem:opt_drift}
Again, based on Assumption~\ref{assum:assumption} (a)--(c), we get
\begin{align*}
\|\nabla_x F_{t+1}(x)-\nabla_x F_t(x)\|
&=\|\nabla_x\mathbb{E}^{w\sim \mathbb{P}_{t+1}}\{f(x,w)\}-\nabla_x \mathbb{E}^{w\sim \mathbb{P}_t}\{f(x,w)\}\|\\
&=\|\mathbb{E}^{w\sim \mathbb{P}_{t+1}}\{\nabla_x f(x,w)\}- \mathbb{E}^{w\sim \mathbb{P}_t}\{\nabla_x f(x,w)\}\|.
\end{align*}
Furthermore,
\begin{align*}
\|\mathbb{E}^{w\sim \mathbb{P}_{t+1}}\{\nabla_x f(x,w)\}- \mathbb{E}^{w\sim \mathbb{P}_t}\{\nabla_x f(x,w)\}\|
&=\sum_{i=1}^{n_x} |\mathbb{E}^{w\sim \mathbb{P}_{t+1}}\{\partial f(x,w)/\partial x_i\}- \mathbb{E}^{w\sim \mathbb{P}_t}\{\partial  f(x,w)/\partial x_i\}|.
\end{align*}
Following~\eqref{eqn:dual} with Assumption~\ref{assum:assumption}~(f), we get
\begin{align*}
|\mathbb{E}^{w\sim \mathbb{P}_{t+1}}\{\partial f(x,w)/\partial x_i\}- \mathbb{E}^{w\sim \mathbb{P}_t}\{\partial  f(x,w)/\partial x_i\}|
\leq \mathfrak{W}(\mathbb{P}_t,\mathbb{P}_{t+1}) L_w.
\end{align*}
Finally, because of Assumption~\ref{assum:assumption}~(d), $\|\nabla_x F_{t+1}(x)-\nabla_x F_t(x)\|\leq \rho n_x L_w$. Assumptions~\ref{assum:assumption}~(a)--(c) and~(g) implies that $\nabla_x^2 F_t(x)\succeq \sigma I$. The rest of the proof follows from Lemma~4.2 in~\cite{selvaratnam2018numerical}.

\section{Proof of Lemma~\ref{lem:time_wise_est_bound}}
\label{proof:lem:time_wise_est_bound}
The proof follows the same steps as that of \cite[Proposition 6.1.8]{bertsekas2015convex} and the facts that the proximal operator is nonexpansive and $ x^*_t = \prox{h}{\alpha_t}{x^*_t - \alpha_t \nabla F_t(x^*_t)}$. Specifically, noting $\eta_t(x_t)= \nabla F_t (x_t) + \epsilon_t$, after straightforward algebraic manipulations and using Assumptions~\ref{assum:assumption} (e) and (g) one obtains
\begin{align*}
\|x_{t+1} - x^*_t\|^2   &\leq \|x_t - x^*_t\|^2 - 2\alpha_t (\nabla F_t(x_t) - \nabla F_t(x^*_t))^T (x_t - x^*_t) + \alpha_t^2 \|\nabla F_t(x_t) - \nabla F_t(x^*_t)\|^2 \\ &\quad\quad +  \alpha_t^2 \|\epsilon_t\|^2 + 2 \alpha_t \epsilon_t^T \left [ (x_t - \alpha_t \nabla F_t(x_t)) - (x^*_t - \alpha_t \nabla F_t(x^*_t)) \right ]\\
& \leq r_t^2 \|x_t - x^*_t\|^2 + 2 \alpha_t r_t \|\epsilon_t\| \|x_t - x^*_t\| + \alpha_t^2  \|\epsilon_t\|^2\\
&\leq (r_t \|x_t - x^*_t\| + \alpha_t \|\epsilon_t\|)^2.
\end{align*}



\section{Proof of Theorem~\ref{thm:regret}}
\label{proof:thm:regret}

Before proving Theorem~\ref{thm:regret}, as the sum of the errors for finite horizons are crucial to to bound regret, in the following two auxiliary lemmas, we present bounds on finite sums of errors.

\begin{lemma}\label{lem:sum_track}
	Let $\alpha$ be a constant scalar in $(0,2/L_x)$. Then
	\begin{align}
	\sum_{t=1}^T e_t &\leq \dfrac{1}{1-r}  \left [ (e_1 -r e_T -\alpha\|\epsilon_T\|) + \alpha E_T +V_T\right ],
	\label{eq:accum_track_err}
	\end{align}
	where $V_T=\sum_{t=1}^{T-1} \|x^*_{t+1} - x^*_t\|$, $E_T=\sum_{t=1}^T \|\epsilon_t\|$, and $e_t=\|x^*_t-x_t\|$.
\end{lemma}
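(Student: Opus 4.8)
\textbf{Proof plan for Lemma~\ref{lem:sum_track}.}

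The plan is to start from the one-step recursion already established in the main text. Recall that \eqref{eq:time_wise_bound} gives
\begin{align*}
e_{t+1} = \|x_{t+1}-x^*_{t+1}\| \leq r\, e_t + \alpha \|\epsilon_t\| + \|x^*_{t+1}-x^*_t\|,
\end{align*}
with $r = \max(|1-\alpha L_x|,|1-\alpha\sigma|) < 1$ for $\alpha\in(0,2/L_x)$. First I would sum this inequality over $t=1,\dots,T-1$, which yields $\sum_{t=2}^T e_t \leq r\sum_{t=1}^{T-1} e_t + \alpha\sum_{t=1}^{T-1}\|\epsilon_t\| + V_T$. The next step is to rewrite both running sums in terms of $S_T := \sum_{t=1}^T e_t$: the left side is $S_T - e_1$, and the right side's first term is $r(S_T - e_T)$. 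This gives $S_T - e_1 \leq r S_T - r e_T + \alpha\sum_{t=1}^{T-1}\|\epsilon_t\| + V_T$, i.e. $(1-r)S_T \leq e_1 - r e_T + \alpha\sum_{t=1}^{T-1}\|\epsilon_t\| + V_T$.

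To match the stated bound, I then account for the $\epsilon_T$ term: since $E_T = \sum_{t=1}^T \|\epsilon_t\|$, we have $\sum_{t=1}^{T-1}\|\epsilon_t\| = E_T - \|\epsilon_T\|$, so the right side becomes $e_1 - r e_T - \alpha\|\epsilon_T\| + \alpha E_T + V_T$. Dividing through by $1-r>0$ produces exactly \eqref{eq:accum_track_err}. The argument is essentially a telescoping/geometric-sum manipulation once the per-step contraction bound is in hand.

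I do not anticipate a genuine obstacle here — the only mild subtlety is bookkeeping at the summation endpoints (making sure the $e_1$, $e_T$, and $\|\epsilon_T\|$ boundary terms land with the right coefficients) and confirming $1-r>0$ so the final division preserves the inequality direction. Both are immediate given the hypotheses. One should also note that the recursion \eqref{eq:time_wise_bound} is deterministic (pathwise), so no expectation is needed in this lemma; the expectation-based versions are deferred to the regret theorems.
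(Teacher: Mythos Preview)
Your proposal is correct and follows essentially the same route as the paper's proof: both sum the per-step contraction inequality~\eqref{eq:time_wise_bound} over $t$, rewrite the resulting partial sums in terms of $S_T=\sum_{t=1}^T e_t$ with the appropriate boundary corrections ($e_1$, $-re_T$, and $-\alpha\|\epsilon_T\|$), and divide through by $1-r>0$. Your bookkeeping of the endpoints is exactly right and, if anything, slightly cleaner than the paper's presentation.
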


\begin{proof} Summing both sides of \eqref{eq:time_wise_bound} yields
	\begin{align*}
	\sum_{t=1}^T e_t &\leq e_1 + \sum_{t=2}^T \|x_t-x^*_{t-1}\| + \sum_{t=2}^T \|x^*_t - x^*_{t-1}\| \\
	&\leq e_1 + r \sum_{t=2}^T e_{t-1}+ \alpha \sum_{t=2}^T \|\epsilon_{t-1}\| + \sum_{t=2}^T \|x^*_t - x^*_{t-1}\|\\
	&\leq (e_1 - r e_T) + r \sum_{t=1}^T e_t+ \alpha \sum_{t=2}^{T} \|\epsilon_{t-1}\| + \sum_{t=2}^T \|x^*_t - x^*_{t-1}\|\\
	\sum_{t=1}^T e_t &\leq \dfrac{1}{1-r}  \left [ (e_1 -r e_T) + \alpha \sum_{t=1}^{T-1} \|\epsilon_t\| +\sum_{t=1}^{T-1} \|x^*_{t+1} - x^*_t\|\right ].
	\end{align*}
\end{proof}

\begin{lemma}\label{lem:sum_est}
	Let $\alpha$ be a constant scalar in $(0,2/L_x)$. Then
	\begin{align}
	\sum_{t=1}^T \bar{e}_t &\leq \dfrac{1}{1-r}  \left [ \left ( \bar{e}_1 - r \bar{e}_T - \alpha \|\epsilon_1\| \right ) + \alpha E_T +r V_T \right ],
	\label{eq:accum_est_err}
	\end{align}
	where $V_T=\sum_{t=1}^{T-1} \|x^*_{t+1} - x^*_t\|$, $E_T=\sum_{t=1}^T \|\epsilon_t\|$, and $\bar{e}_t=\|x_{t+1}-x^*_t\|$.
\end{lemma}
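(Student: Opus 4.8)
The plan is to mirror the proof of Lemma~\ref{lem:sum_track}, working instead from the estimation-error recursion. The two ingredients are already available: from Lemma~\ref{lem:time_wise_est_bound} with constant step size $\alpha$ (so that $r_t = r$) we have the one-step bound $\bar{e}_t \leq r e_t + \alpha\|\epsilon_t\|$, and from the triangle inequality used to derive \eqref{eq:time_wise_bound} we have $e_{t+1} = \|x_{t+1} - x^*_{t+1}\| \leq \bar{e}_t + \|x^*_{t+1} - x^*_t\|$. The idea is to sum the first inequality, use the second to re-express the resulting $\sum_t e_t$ in terms of $\sum_t \bar{e}_t$ and $V_T$, and then solve the resulting self-referential inequality for $\sum_{t=1}^T \bar{e}_t$.

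Concretely, first I would isolate the $t=1$ term and write $\sum_{t=1}^T \bar{e}_t = \bar{e}_1 + \sum_{t=2}^T \bar{e}_t$; keeping $\bar{e}_1$ outside the recursion is what produces the $\bar{e}_1$ (rather than $r e_1$) in the claimed bound. Applying the one-step bound for $t = 2, \dots, T$ gives $\sum_{t=2}^T \bar{e}_t \leq r\sum_{t=2}^T e_t + \alpha\big(E_T - \|\epsilon_1\|\big)$. Next, summing $e_{t+1} \leq \bar{e}_t + \|x^*_{t+1} - x^*_t\|$ over $t = 1, \dots, T-1$ yields $\sum_{t=2}^T e_t \leq \sum_{t=1}^{T-1}\bar{e}_t + V_T = \big(\sum_{t=1}^T \bar{e}_t\big) - \bar{e}_T + V_T$. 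Substituting this and writing $S := \sum_{t=1}^T \bar{e}_t$ gives $S \leq \bar{e}_1 + r\big(S - \bar{e}_T + V_T\big) + \alpha\big(E_T - \|\epsilon_1\|\big)$, i.e. $(1-r)S \leq \big(\bar{e}_1 - r\bar{e}_T - \alpha\|\epsilon_1\|\big) + \alpha E_T + r V_T$. Since $\alpha \in (0, 2/L_x)$ forces $r = \max(|1-\alpha L_x|, |1-\alpha\sigma|) < 1$, dividing by $1-r$ gives exactly \eqref{eq:accum_est_err}.

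There is no genuine obstacle; the proof is routine once the two recursions are in hand. The only point needing care is the bookkeeping of boundary terms — which index to peel off at each end so that the rearranged inequality lands on the stated right-hand side (with $\bar{e}_1$, $-r\bar{e}_T$, $-\alpha\|\epsilon_1\|$, and $rV_T$, as opposed to, say, $V_T$). Peeling $\bar{e}_1$ off before applying the one-step bound, and summing the $e_{t+1}$ inequality only up to $t = T-1$, are the choices that make it work; everything else is the same algebra as in Lemma~\ref{lem:sum_track}.
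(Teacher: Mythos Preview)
Your proposal is correct and essentially identical to the paper's proof: both use the one-step bound $\bar e_t \leq r e_t + \alpha\|\epsilon_t\|$ from Lemma~\ref{lem:time_wise_est_bound} together with the triangle inequality $e_t \leq \bar e_{t-1} + \|x^*_t - x^*_{t-1}\|$, and then sum and rearrange with the same boundary bookkeeping. The only cosmetic difference is that the paper first composes these two inequalities into the single recursion $\bar e_t \leq r\bar e_{t-1} + r\|x^*_t - x^*_{t-1}\| + \alpha\|\epsilon_t\|$ before summing, whereas you sum each inequality separately and substitute; the algebra and result coincide.
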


\begin{proof}
	From Lemma \ref{lem:time_wise_est_bound}, we obtain
	\begin{align}\label{eq:time_wise_est_bound}
	\|x_{t+1}-x^*_t\| \leq r \|x_t-x^*_{t-1}\| + r\|x^*_t - x^*_{t-1}\| + \alpha \|\epsilon_t\|.
	\end{align}
	Summing both sides of \eqref{eq:time_wise_est_bound} results in
	\begin{align*}
	\sum_{t=2}^T \bar{e}_t &\leq r \sum_{t=2}^T \bar{e}_{t-1} + r \sum_{t=2}^T \|x^*_t - x^*_{t-1}\| +  \alpha \sum_{t=2}^T \|\epsilon_t\|
	\end{align*}
	Adding $\bar{e}_1$ to both sides, and adding and subtracting $r\bar{e}_T$ and $\alpha \|\epsilon_1\|$ to and from the right-hand-side yields
	\begin{align*}
	\sum_{t=1}^T \bar{e}_t &\leq \left ( \bar{e}_1 - r \bar{e}_T - \alpha \|\epsilon_1\| \right )+r \sum_{t=1}^T \bar{e}_{t} + r \sum_{t=1}^{T-1} \|x^*_{t+1} - x^*_t\| +  \alpha \sum_{t=1}^T \|\epsilon_t\|\\
	&\leq \dfrac{1}{1 - r} \left [ \left ( \bar{e}_1 - r \bar{e}_T - \alpha \|\epsilon_1\| \right ) + r \sum_{t=1}^{T-1} \|x^*_{t+1} - x^*_t\| +  \alpha \sum_{t=1}^T \|\epsilon_t\|  \right]
	\end{align*}
\end{proof}

Now we are ready to prove Theorem~\ref{thm:regret}. The necessary and sufficient condition for convexity of differentiable functions and applying the Cauchy-Schwarz inequality along with Assumption~\ref{assum:bounded_grad} leads to
\begin{align*}
F_t(x_t) - F_t(x^*_t)  &\leq  \nabla F_t(x_t)^T (x_t - x^*_t)\\
& \leq G \|x_t - x^*_t\|.
\end{align*}
Applying Lemma~\ref{lem:sum_track} for all $t\in\{1,\dots,T\}$ yields
\begin{align*}
\sum_{t=1}^T F_t(x_t) - F_t(x^*_t)  & \leq \dfrac{G}{1-r}  \left [ (e_1 -r e_T ) + \alpha \sum_{t=1}^{T-1} \|\epsilon_t\| +\sum_{t=1}^{T-1} \|x^*_{t+1} - x^*_t\|\right ].
\end{align*}
The bounds for ${\textup{Reg}}_T^{\textup{Estimation}}$ is proven similarly using Lemma~\ref{lem:sum_est}.

\section{Proof of Lemma~\ref{lem:dont_know_what_to_call_it}}
\label{proof:lem:dont_know_what_to_call_it}
From the first order optimality condition there exists a $g_{t+1} \in \partial h(x_{t+1})$ such that
\begin{align}
x_t - \alpha \nabla F_t(x_t) - \alpha \epsilon_t - x_{t+1} -  \alpha g_{t+1} = 0.
\end{align}
Consequently,
\begin{align*}
\|\nabla F_t(x_t) + \epsilon_t +   g_{t+1}\| & \leq \dfrac{1}{\alpha} \|x_{t+1} - x_t\|\\
&\leq  \dfrac{1}{\alpha} \left [ \|x_{t+1} - x^*_t\| +  \|x_t - x^*_{t-1}\| +  \|x^*_t - x^*_{t-1}\| \right ]\\
&\leq  \dfrac{1+r}{\alpha} \left [   \|x_t - x^*_{t-1}\| +  \|x^*_t - x^*_{t-1}\| \right ] + \|\epsilon_t\|\\
\mathbb{E} [\|\nabla F_t(x_t) + \epsilon_t +   g_{t+1}\|]&\leq \dfrac{1+r}{\alpha} \left [   r^{t-2} \mathbb{E}[\bar{e}_1 ]+ \dfrac{1 - r^{t-2}}{1-r} (\alpha \Delta + rv) +  v \right ] + \Delta\\
&\leq \dfrac{1+r}{\alpha} \left [ \mathbb{E}[\bar{e}_1] + \dfrac{\alpha \Delta + v}{1-r} \right ] + \Delta\\
&\leq \dfrac{1+r}{\alpha} \left [ r e_1 + \alpha \Delta + \dfrac{\alpha \Delta + v}{1-r} \right ] + \Delta
\end{align*}
Let $\Phi_t(x) = F_t(x) + h(x)$. Thus,
$\partial \Phi_t(x_t) = \nabla F_t(x) + \partial h(x_t)$.
For any $\varphi_t \in\partial\Phi_t(x_t)$ there exists a $\gamma_t\in \partial h(x_t)$ such that
$\varphi_t = \nabla F_t(x) + \gamma_t$.
Hence,
\begin{align*}
\mathbb{E} [\|\varphi_t\|] & =\mathbb{E} [ \| \nabla F_t(x_t) + \epsilon_t +   g_{t+1} + \gamma_t - \epsilon_t -   g_{t+1}\|]\\
&\leq \mathbb{E} [ \| \nabla F_t(x_t) + \epsilon_t +   g_{t+1} \| + \|\gamma_t\| +\| \epsilon_t\| + \|  g_{t+1}\|]\\
&\leq \overline{G},
\end{align*}
where
\begin{align*}
\overline{G} = \dfrac{1+r}{\alpha} \left [ r e_1 + \alpha \Delta + \dfrac{\alpha \Delta + v}{1-r} \right ] + 2\Delta +2\lambda n_c L_c.
\end{align*}

\end{document}